\documentclass[11pt]{amsart}
\usepackage{amssymb}
\usepackage{amsmath}
\usepackage{amsthm}
\usepackage{verbatim}

\newtheorem{thm}{Theorem}[section]
\newtheorem{cor}[thm]{Corollary}
\newtheorem{lem}[thm]{Lemma}

\newtheorem{prop}[thm]{Proposition}
\theoremstyle{definition}
\newtheorem{Def}[thm]{Definition}
\newtheorem{eg}[thm]{Example}
\newtheorem*{eg*}{Example}

\newtheorem*{rem}{Remark}

\newcommand{\sgn}{\operatorname{sgn}}
\newcommand{\sym}[1]{\mathfrak{S}_{#1}}
\newcommand{\up}[1]{\text{$\uparrow$}^{#1}}
\newcommand{\down}[1]{\text{$\downarrow$}_{#1}}
\newcommand{\F}{\mathbb{F}}
\newcommand{\soc}{\operatorname{soc}}

\newcommand{\e}[1]{\varepsilon(#1)}
\newcommand{\bigboxtimessymbol}{\text{\Large $\boxtimes$}}
\newcommand{\bigboxtimes}{\operatorname*{\bigboxtimessymbol}}

\newcommand{\Ext}{\operatorname{Ext}}
\newcommand{\rad}{\operatorname{rad}}

\begin{document}
\title[Complexities of simple modules]{The complexities of some simple modules\\ of the symmetric groups}

\author{Kay Jin Lim}
\author{Kai Meng Tan}
\address{Department of Mathematics, National University of Singapore, Block S17, 10 Lower Kent Ridge Road, Singapore 119076.}

\email[K. J. Lim]{matlkj@nus.edu.sg}
\email[K. M. Tan]{tankm@nus.edu.sg}

\date{May 2012}
\thanks{2010 {\em Mathematics Subject Classification}. 20C30.}
\thanks{Supported by Singapore Ministry of Education Academic Research Fund R-146-000-135-112.}

\begin{abstract}
We show that the simple modules of the Rouquier blocks of symmetric groups, in characteristic $p$ and having $p$-weight $w$ with $w < p$, have a common complexity $w$, and that when $p$ is odd, $D^{(p+1,1^{p-1})}$ has complexity $1$, while the other simple modules labelled by a partition having $p$-weight $2$ have complexity $2$.
\end{abstract}

\maketitle

\section{Introduction} \label{S:intro}

Representation theory of groups, first studied by Frobenius in the late nineteenth century, has now become a major research area.  It was originally used as a tool to understand the underlying structure of groups, and a major triumph for representation theorists came when Burnside proved, using representation theory, that every finite group whose order is divisible by at most two distinct primes is soluble.  The area has grown so much over the last century that it is now of interest to not only group theorists and algebraists, but also number theorists, algebraic geometers and algebraic topologists.

While the ordinary representation theory of finite groups has been well developed, very little is known in comparison in the modular case.  The main difficulty in the latter case is that the group algebra of a finite group $G$ is not only non-semi-simple as an algebra, but is also of infinite representation type whenever the Sylow $p$-subgroups of $G$ are non-cyclic (where $p$ is the characteristic of the ground field).  Invariants such as vertices and complexities --- which may be viewed as a measure of how much a module differs from a projective module --- have been invented to put the indecomposable modules into classes.  However, these invariants for a given module are very difficult to compute in general, and are known in very few cases.

Surprising little of these invariants have been computed even for the symmetric groups.  The complexities of its simple modules are almost completely unknown.  The only non-trivial ones which have been computed so far (as far as we know) are of the completely splittable simple modules \cite{HN} and the simple modules labelled by regularised partitions of hook partitions of $n$ with $p \nmid n$ \cite{Lim1}.  (Actually the first author in \cite{Lim1} computes the complexity of the Specht module $S^{(n-r,1^r)}$, which is simple when $p \nmid n$ \cite{Peel} and hence isomorphic to $D^{(n-r,1^r)^R}$.)

In this paper, we study the complexities of the simple modules of the symmetric groups in two important classes.  The first consists of the simple modules lying in the Rouquier blocks of Abelian defect $w$.  These blocks are well understood by the work of Chuang and the second author \cite{Rouquier}.  We show that the simple modules in these blocks have a common complexity $w$.  Our approach here is to employ the Morita equivalence between the canonical Rouquier block and the principal block of $\F(\sym{p} \wr \sym{w})$ proved by Chuang and Kessar \cite{CK}, and study the simple modules of the latter block.

The second class consists of the simple modules lying in defect two blocks.  These blocks have $p$-weight $2$, where $p$ is odd, and are also well understood by the works of several authors over the last two decades.  We show that the simple module $D^{(p+1,1^{p-1})}$ has complexity $1$, while all the other simple modules lying in these blocks have complexity $2$. Our approach here is to use the implicit result of Schroll and the second author in \cite{ST} that each such simple module either `induces semi-simply' to a Rouquier block through a sequence of $[2:k]$-pairs, or `restricts semi-simply' to the principal block $B_0(\sym{2p})$ of $\sym{2p}$ through a sequence of $[2:k]$-pairs.  As the Rouquier blocks have already been dealt with (in our first class), we are thus reduced to the study of the simple modules of $B_0(\sym{2p})$, for which we rely on the earlier work of Mikaelian \cite{Mikaelian}.

The paper is organised as follows: we give a summary of the background theory in the next section.  In section \ref{S:Rouq}, we study the Rouquier blocks, while in section \ref{S:wt2}, we study the defect two blocks.

\section{Preliminaries} \label{S:prelim}

In this section, we provide the background theory that we require.  General references are \cite{DB,GJ}.  Throughout we fix an algebraically closed field $\F$ of characteristic $p$, and $\F_p$ be the unique subfield of $\F$ of order $p$. All modules are left modules.

\subsection{Blocks}
Let $G$ be a finite group.  The group algebra $\F G$
can be uniquely decomposed into a direct sum of indecomposable two-sided ideals, which are called {\em blocks}.
Each block $B$ of $\F G$ is associated to a unique primitive central idempotent $e_B$ of $\F G$.  Let $M$ be a nonzero $\F G$-module $M$.  We say that $M$ {\em lies} in $B$ if and only if $e_B$ acts as identity on $M$.

The {\em principal} block of $\F G$ is the block in which the trivial $\F G$-module $\F$ lies.

\subsection{Cohomological varieties of modules}

Let $G$ be a finite group.  Denote by $V_G$ the affine variety defined by the maximum ideal spectrum of the cohomology ring $H^\cdot(G,\F) = \Ext^{\cdot}_{\F G} (\F, \F)$.

Let $M$ be a finite-dimensional $\F G$-module. The cohomological variety $V_G(M)$ of $M$ is defined to be the subvariety of $V_G$ consisting of maximal ideals of $H^\cdot(G,\F)$ containing the annihilator of $\Ext^*_{\F G}(M,M)$ (thus $V_G(\F) = V_G$). It is well known that $V_G(M)$ is homogeneous. We write $\overline{V_G(M)}$ for the projective variety associated to $V_G(M)$.

Let $H$ be a subgroup of $G$.  We write $V_H(M)$ for the cohomological variety of $M$ as a $\F H$-module.  This admits a natural action of $N_G(H)/C_G(H)$.  There is a morphism of affine varieties $\iota : V_H(M) \to V_G(M)$ such that $\iota(a) = \iota(b)$ if and only if $a$ and $b$ lie in the same $N_G(H)/C_G(H)$-orbit.

We collate together some results which we require.

\begin{thm}\label{T:cohomological variety}
Let $G$ be a finite group and let $M$ be a finite-dimensional $\F G$-module.
\begin{enumerate}
\item $V_G(M) = \bigcup_E \iota(V_E(M))$, where $E$ runs over representatives of conjugacy classes of maximal elementary Abelian $p$-subgroups of $G$.
\item If $M$ is indecomposable, then $\overline{V_G(M)}$ is connected in the Zariski topology.
\end{enumerate}
\end{thm}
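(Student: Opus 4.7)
The plan is to invoke the classical theorems of Quillen, Avrunin--Scott (for part (i)) and Carlson (for part (ii)) from support variety theory; below I sketch the key ideas rather than reprove them in full.

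For part (i), the main ingredient is Quillen's stratification theorem together with its module-theoretic refinement due to Avrunin and Scott. First, for any subgroup $H \leq G$, the restriction map $H^\cdot(G,\F) \to H^\cdot(H,\F)$ is a ring homomorphism and hence induces a morphism of affine varieties $\iota : V_H \to V_G$; since restriction also sends the annihilator of $\Ext^*_{\F G}(M,M)$ into the annihilator of $\Ext^*_{\F H}(M,M)$, this morphism carries $V_H(M)$ into $V_G(M)$. Quillen's theorem asserts that the nilradical of $H^\cdot(G,\F)$ equals the intersection of kernels of restrictions to elementary abelian $p$-subgroups, which geometrically reads $V_G = \bigcup_E \iota(V_E)$. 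The Avrunin--Scott refinement intersects this with $V_G(M)$ to yield $V_G(M) = \bigcup_E \iota(V_E(M))$, and one may restrict to maximal $E$ since $\iota(V_E) \subseteq \iota(V_{E'})$ whenever $E \leq E'$. The identification of fibres of $\iota$ with $N_G(E)/C_G(E)$-orbits is a Mackey-type calculation on double cosets.

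For part (ii), the approach is Carlson's original argument via his $L_\zeta$ modules. Suppose for contradiction that $\overline{V_G(M)} = X_1 \sqcup X_2$ is a non-trivial disjoint union of closed sets. By the Nullstellensatz one can find homogeneous elements $\zeta_1, \zeta_2 \in H^\cdot(G,\F)$ whose projective zero loci contain $X_2$ and $X_1$ respectively but not the complementary piece. The Carlson modules $L_{\zeta_i}$, defined via short exact sequences $0 \to L_{\zeta_i} \to \Omega^{n_i}(\F) \to \F \to 0$, then satisfy $V_G(M \otimes L_{\zeta_i}) \subseteq V_G(M) \cap V_G(\zeta_i)$, so the two tensor products have disjoint supports; manipulating the resulting idempotents in the endomorphism ring of $M$ in the stable module category produces a non-trivial direct sum decomposition of $M$, contradicting indecomposability.

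The main obstacle lies in part (ii): extracting an actual module decomposition from the geometric decomposition of the support variety. This is where Carlson's argument does serious work, since the passage from stable-category idempotents to honest module summands requires the Krull--Schmidt theorem together with a careful analysis of the interaction between tensoring with $L_{\zeta_i}$ and the structure of $M$. Part (i), by contrast, is largely formal once Quillen's stratification theorem is available, and for practical purposes in this paper one would simply cite the results from a standard reference such as Benson's monograph.
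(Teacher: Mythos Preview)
Your sketch is correct in its essentials: part (i) is indeed the Avrunin--Scott refinement of Quillen's stratification theorem, and part (ii) is Carlson's connectedness theorem, with the argument via $L_\zeta$-modules and stable idempotents that you outline. However, the paper does not prove this theorem at all. It appears in the preliminaries section under the heading ``We collate together some results which we require,'' and is simply quoted as background from the standard literature (the general reference given is Benson's monograph \cite{DB}). So there is no proof in the paper to compare against; your proposal supplies content the authors deliberately omitted. For the purposes of this paper a bare citation would have sufficed, as you yourself note at the end.
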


\subsection{Rank varieties of modules}

Let $E$ be an elementary Abelian $p$-group isomorphic to $(C_p)^k$, with generators $g_1,g_2, \dotsc, g_k$.  For each $\alpha = (\alpha_1, \alpha_2,\dotsc, \alpha_k) \in \F^{k}$ with $\alpha \ne 0$, let $u_{\alpha} = 1+ \sum_{i=1}^k \alpha_i (g_i-1) \in \F E$.  Then $(u_{\alpha})^p = 1$.  Write $\langle u_{\alpha} \rangle$ for the cyclic group of order $p$ generated by $u_{\alpha}$.  Then the group algebra $\F\langle u_{\alpha} \rangle$ is subalgebra of $\F E$.

Let $M$ be a finite-dimensional $\F E$-module.  The {\em rank variety} $V^{\#}_E(M)$ of $M$ is defined as
\[
V^{\#}_E(M)=\{ \alpha \in \F^k \mid \alpha \ne 0,\  M \text{ is non-free as an $\F\langle u_{\alpha} \rangle$-module} \} \cup \{0\}.
\]
This is an affine subvariety of $\F^k$, and is independent of the choice and order of the generators (in the sense that two varieties obtained using different choices of generators are isomorphic).  More importantly, it is isomorphic to the cohomological variety $V_E(M)$ of $M$.  %In particular, we see that $c_E(M) \leq k$, with equality holding trivially if $p \nmid \dim_{\F}(M)$.

\begin{thm} \label{T:coh}
Let $u$ and $v$ be units of $\F E$ of multiplicative order $p$ such that $u-1 \notin (\rad(\F E))^2$ and $u-v \in (\rad(\F E))^2$, and let $M$ be a finite-dimensional $\F E$-module.  Then $M$ is free as an $\F\langle u\rangle$-module if and only if $M$ is free as an $\F\langle v\rangle$-module.

In particular, if $\alpha = (\alpha_1,\dotsc, \alpha_k) \in (\F_p)^k$, then $M$ is free as an $\F\langle u_{\alpha}\rangle$-module if and only if $M$ is free as an $\F\langle \prod_{i=1}^k g_i^{\alpha_i}\rangle$-module.
\end{thm}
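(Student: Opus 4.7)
Setting $r = u - 1$ and $s = v - 1$, my plan is to reduce the freeness question to a rank comparison and then invoke invariance of the rank variety. A standard fact is that a finite-dimensional module $N$ over $\F[x]/(x^p)$ is free iff multiplication by $x^{p-1}$ has rank $(\dim N)/p$; applied to $\F\langle u\rangle \cong \F[r]/(r^p)$ and $\F\langle v\rangle \cong \F[s]/(s^p)$, the theorem reduces to the equality $\operatorname{rank}(r^{p-1}|_M) = \operatorname{rank}(s^{p-1}|_M)$. As a preliminary, every $a \in \rad(\F E)$ satisfies $a^p = 0$: writing $a = \sum c_j m_j$ with $m_j$ nonconstant monomials in the generators $x_i = g_i - 1$, Frobenius in the commutative characteristic-$p$ ring $\F E$ gives $a^p = \sum c_j^p m_j^p$, and each $m_j^p$ vanishes because $m_j$ is divisible by some $x_i$, whence $m_j^p$ is divisible by $x_i^p = 0$. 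Setting $t = s - r \in (\rad(\F E))^2$, the pencil $r_\lambda := r + \lambda t$ for $\lambda \in \F$ therefore satisfies $r_\lambda^p = 0$ for all $\lambda$, and since $r \notin (\rad(\F E))^2$ while $t \in (\rad(\F E))^2$, we also have $r_\lambda \notin (\rad(\F E))^2$; hence each $\F\langle 1 + r_\lambda\rangle \subseteq \F E$ is a copy of $\F[x]/(x^p)$.

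The crux is to show that $\lambda \mapsto \operatorname{rank}(r_\lambda^{p-1}|_M)$ is constant on $\F$. For this I would invoke the Avrunin--Scott theorem, which provides a canonical isomorphism between the rank variety $V^{\#}_E(M)$ and the cohomological variety $V_E(M)$. Since $V_E(M)$ is intrinsically defined, this identifies the rank variety with a subvariety of the intrinsic quotient $\rad(\F E)/(\rad(\F E))^2$, and freeness of $M$ over $\F\langle 1+r\rangle$ depends only on the class $\bar r$ there. As $\bar r = \bar s$, the two freeness statuses agree. Justifying this intrinsic characterization is the main obstacle: a naive semi-continuity argument on the pencil only shows that the rank of $r_\lambda^{p-1}$ on $M$ is maximal on a dense Zariski-open subset of $\F$, leaving open the possibility of a drop at the specific point $\lambda = 1$; it is the Avrunin--Scott identification that rules this out.

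For the second part, the binomial theorem gives $g_i^{\alpha_i} = \sum_{j=0}^{\alpha_i}\binom{\alpha_i}{j}(g_i - 1)^j \equiv 1 + \alpha_i(g_i - 1) \pmod{(\rad(\F E))^2}$. Multiplying these congruences and absorbing cross-terms yields $\prod_i g_i^{\alpha_i} \equiv 1 + \sum_i \alpha_i(g_i - 1) = u_\alpha \pmod{(\rad(\F E))^2}$, so $u_\alpha$ and $\prod_i g_i^{\alpha_i}$ differ by an element of $(\rad(\F E))^2$. Both are units of order $p$ (any $1 + r'$ with nonzero $r' \in \rad(\F E)$ has order $p$, by the same Frobenius calculation), and for $\alpha \ne 0$ we have $u_\alpha - 1 \notin (\rad(\F E))^2$ since the $\overline{g_i - 1}$ form a basis of $\rad(\F E)/(\rad(\F E))^2$; the first part then applies directly.
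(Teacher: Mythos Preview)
The paper does not give its own proof of this statement: Theorem~\ref{T:coh} is recorded in the preliminaries as a standard fact (the implicit reference is Benson's \emph{Representations and Cohomology~II}, around \S5.8), so there is no ``paper's proof'' to match. That said, your proposal is not a complete proof, and the gap is exactly where you flag it.

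Your reduction to comparing $\operatorname{rank}(r^{p-1}|_M)$ with $\operatorname{rank}(s^{p-1}|_M)$ is correct, as is the nilpotency computation and the derivation of the ``In particular'' clause from the main assertion. The problem is the step where you invoke Avrunin--Scott to conclude that freeness over $\F\langle 1+r\rangle$ depends only on the class $\bar r\in\rad(\F E)/(\rad(\F E))^2$. The classical Avrunin--Scott theorem identifies $V_E^{\#}(M)$ with $V_E(M)$, but $V_E^{\#}(M)$ is by definition the locus of those $\alpha$ for which $M$ is non-free over the \emph{specific} lifts $u_\alpha=1+\sum_i\alpha_i(g_i-1)$; it says nothing about an arbitrary unit $1+r$ with $\bar r=\overline{u_\alpha-1}$. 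Deducing that all lifts of a given class behave the same way is precisely the content of the theorem you are trying to prove, so the appeal is circular. (If instead you have in mind an intrinsic $\pi$-point formulation \`a la Friedlander--Pevtsova, then you are quoting a package that already contains this lemma rather than proving it.) You yourself note that semicontinuity along the pencil $r_\lambda$ only gives generic constancy and cannot rule out a rank drop at $\lambda=1$; Avrunin--Scott does not supply the missing inequality.

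The standard elementary argument avoids this entirely. Since $r\notin(\rad(\F E))^2$ and $r^p=0$, one may complete $r$ to a system of parameters: there exist $y_2,\dotsc,y_k\in\rad(\F E)$ with $y_i^p=0$ such that the map $\F[R,Y_2,\dotsc,Y_k]/(R^p,Y_2^p,\dotsc,Y_k^p)\to\F E$ sending $R\mapsto r$, $Y_i\mapsto y_i$ is an isomorphism (surjectivity by Nakayama, then dimension count). With this presentation in hand one argues directly---for instance via a filtration by powers of the ideal $(y_2,\dotsc,y_k)$, on whose subquotients $r$ and $s=r+t$ act identically since $t\in(\rad)^2$---to obtain the equality of ranks. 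This is the route taken in Benson's book, and it is what the paper is pointing to.
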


The above theorem also allows us to describe the natural action of $N_G(E)$ on $V^{\#}_E(M)$, when $E$ is an elementary Abelian $p$-subgroup of a finite group $G$:  if $n \in N_G(E)$ such that $ng_in^{-1} = \prod_{j=1}^k g_j^{a_{ij}}$ for each $i$, and $\alpha = (\alpha_1,\dotsc, \alpha_k) \in \F^k$, then $n \cdot \alpha = (\sum_{j=1}^k a_{1j}\alpha_j, \dotsc, \sum_{j=1}^k a_{kj}\alpha_j)$.

%We shall need the following well-known result about rank varieties.

%\begin{thm} \label{L:rkvar}
%Let $E$ be an elementary Abelian $p$-group, and let $M$ and $N$ be finite-dimensional $\F E$-modules. Then
%\[V^\#_E(M \otimes_\F N) = V^\#_E(M) \cap V^\#_E(N).\]
%\end{thm}

\subsection{Complexities}

Let $A$ be a finite-dimensional algebra over $\F$, and let $M$ be a finitely generated $A$-module.  Let
\[
\dotsb \to P_1 \to P_0 \to M
\]
be a minimal projective resolution of $M$.  The {\em complexity} of $M$, denoted $c_A(M)$, is defined as
\[
\min\{ n \in \mathbb{Z}_{\geq 0} \mid \lim_{r \to \infty} \tfrac{\dim_{\F}(P_r)}{r^n} = 0 \}.
\]

When $A = \F G$, where $G$ is a finite group, we write $c_G(M)$ for $c_A (M)$.  We collate together some well-known results about complexities of modules of finite group algebras.

\begin{thm} \label{T:complexity}
Let $G$ be a finite group, and let $M$ be a finitely generated $\F G$-module.
\begin{enumerate}
\item $c_G(M)$ equals the Krull dimension of $V_G(M)$.
\item $c_G(M) = 0$ if and only if $M$ is projective.
\item $c_G(M) = \max_E \{ c_E(M) \}$ where $E$ runs over representatives of conjugacy classes of maximal elementary Abelian $p$-subgroups of $G$.
\item If $G$ is a subgroup of $K$, then $c_K(M \up{K}) = c_G(M)$.
\item If $N$ is another finitely generated $\F G$-module, then $c_G(M \oplus N) = \max\{c_G(M), c_G(N)\}$.
\item If $K$ is another finite group, and $N$ is a finitely generated $\F K$-module, then
 $c_{G \times K}(M \boxtimes_\F N) = c_G(M) + c_K(N)$.
\end{enumerate}
\end{thm}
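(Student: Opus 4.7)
The plan is to handle these six standard facts in sequence, starting with (i), which is the essential input. My approach would be to recall that, by definition, $c_G(M)$ is the smallest $n$ such that $\dim_{\F} P_r$ grows slower than $r^n$ along a minimal projective resolution $\dotsb\to P_1\to P_0\to M$, and that this growth rate agrees with the polynomial growth rate of $\dim_{\F}\Ext^n_{\F G}(M,M)$. By the Evens--Venkov theorem, $H^\cdot(G,\F)$ is a finitely generated graded-commutative $\F$-algebra and $\Ext^*_{\F G}(M,M)$ is a finitely generated module over it. The standard commutative-algebra fact that the polynomial growth rate of a finitely generated graded module over such a ring equals the Krull dimension of the ring modulo the annihilator then identifies $c_G(M)$ with $\dim V_G(M)$. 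Part (ii) is then immediate: $c_G(M)=0$ forces $\dim_{\F}P_r=0$ for $r\gg 0$, and self-injectivity of $\F G$ makes such an $M$ projective.

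For (iii), I would combine (i) with Theorem~\ref{T:cohomological variety}(i). Each morphism $\iota\colon V_E(M)\to V_G(M)$ is the quotient by the action of the finite group $N_G(E)/C_G(E)$ and is therefore a finite morphism preserving the Krull dimension of its image, so $\dim V_G(M)=\max_E \dim \iota(V_E(M))=\max_E \dim V_E(M)$, which by (i) equals $\max_E c_E(M)$. Part (v) falls out at once: a direct sum of minimal projective resolutions of $M$ and $N$ is itself a minimal projective resolution of $M\oplus N$. For (iv) the two inequalities are handled separately. Applying $\Ind_G^K$ to a minimal projective resolution of $M$ yields a projective resolution of $M\up{K}$ whose term dimensions are merely scaled by the fixed constant $[K:G]$, so $c_K(M\up{K})\le c_G(M)$; conversely, Mackey realises $M$ as a direct summand of $(M\up{K})\down{G}$, and since restriction to a subgroup sends projective modules to projective modules of the same $\F$-dimension, restriction never increases complexity, yielding $c_G(M)\le c_G((M\up{K})\down{G})\le c_K(M\up{K})$.

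Part (vi) is a K\"unneth-style tensor computation: if $P_\bullet\to M$ and $Q_\bullet\to N$ are minimal projective resolutions over $\F G$ and $\F K$, the total complex of the bicomplex $P_\bullet\otimes_{\F}Q_\bullet$ is a minimal projective resolution of $M\boxtimes_{\F}N$ over $\F G\otimes_{\F}\F K\cong \F(G\times K)$. Since $\dim_{\F}(P\otimes Q)_n=\sum_{i+j=n}\dim_{\F}P_i\cdot\dim_{\F}Q_j$, polynomial growth rates $a$ of $\dim P_n$ and $b$ of $\dim Q_n$ produce growth rate $a+b$, yielding the claimed additivity. The main technical obstacle is (i): it rests on both the Evens--Venkov finite-generation theorem for $H^\cdot(G,\F)$ and the commutative-algebra identification of polynomial growth rates with Krull dimensions. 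Once (i) and the basic functorial properties of minimal projective resolutions are in hand, the remaining parts are essentially formal.
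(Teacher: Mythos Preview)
Your sketch is essentially correct and follows the standard arguments found in the references (notably Benson's \emph{Representations and Cohomology}). However, the paper itself does not prove Theorem~\ref{T:complexity}: it is presented as a collation of well-known results from the literature, with the blanket reference to \cite{DB,GJ} at the head of the preliminaries section serving in lieu of a proof. So there is no ``paper's own proof'' to compare against; what you have written is precisely the kind of argument one would find upon following the citation, and each of your six outlines is the standard one. The only small caveat is in (vi): minimality of the tensor-product resolution requires that the differentials on $P_\bullet$ and $Q_\bullet$ land in the radical, which is automatic for minimal resolutions over local (or, here, group) algebras, but is worth making explicit since otherwise the total complex is merely a projective resolution and one must argue separately that the growth rate is unchanged.
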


\subsection{Partitions}

A partition $\lambda
=(\lambda_{1},\lambda_{2},\dotsc)$ is a weakly decreasing sequence of non-negative integers, where for sufficiently large $i$, $\lambda_i =0$.  If $\sum_i \lambda_i = n$, we say $\lambda$ is a partition of $n$.  The length of $\lambda$, denoted $l(\lambda)$, equals $\max \{i \mid \lambda_i >0\}$.  The partition $\lambda$ is $p$-regular if there does not exist $i$ such that $\lambda_i = \lambda_{i+1} = \dotsb = \lambda_{i+p-1} > 0$.
Denote the set of partitions of
${n}$ by $\mathcal{P}_{n}$, and let
$\mathcal{P}=\bigcup_{n} \mathcal{P}_{n}$ be the set of all partitions.

A strictly decreasing sequence $\beta= (\beta_1,\beta_2,\dotsc, \beta_s)$ of non-negative integers is a sequence of $\beta$-numbers for $\lambda$ if $s \geq l(\lambda)$, and $\beta_i = \lambda_i + s - i$ for all $1 \leq i \leq s$.  Every strictly decreasing sequence of non-negative integers is a sequence of $\beta$-numbers for a unique partition.

The James $p$-abacus has $p$ vertical runners, labelled $0,1,\dotsc, p-1$ from left to right.  Its positions are labelled from left to right, and top down, starting from $0$.  The partition $\lambda$ may be displayed on the abacus as follows:  if $\beta =(\beta_1,\beta_2,\dotsc, \beta_s)$ is a sequence of $\beta$-numbers for $\lambda$, then we place a bead at position $\beta_i$ for each $i$.  This is the ($p$-)abacus display of $\lambda$ with $s$ beads.

In an abacus display of $\lambda$, moving a bead from position $a$ to a vacant position $b$, with $a>b$, corresponds to removing a (rim) hook of length $a-b$ from $\lambda$.  When we slide the beads as far up their respective runners as possible, we thus obtain an abacus display of the $p$-core of $\lambda$.  The $p$-weight of $\lambda$ is the total number of times we slide the beads one position up their respective runners to obtain its $p$-core.

A bead in an abacus display of $\lambda$ is {\em addable} (resp.\ {\em removable}) if its succeeding (resp.\ preceding) position is unoccupied.  Moving this bead to its succeeding (resp.\ preceding) unoccupied position corresponds to adding (resp.\ removing) a node to the Young diagram of $\lambda$ (and in so doing, obtain the Young diagram of another partition).  In addition, if this bead lie in the runner $i$ and the abacus display has $s$ beads, we call this bead $r$-addable (resp.\ $r$-removable) where $r$ is the residue class of $i-s+1$ (resp.\ $i-s$) modulo $p$.

An $r$-addable bead at position $t$ is {\em $r$-conormal} if the number of $r$-addable nodes strictly between positions $x$ and $t$ is always not less than that of the $r$-removable nodes strictly between positions $x$ and $t$ for all $x < t$. An $r$-removable bead at position $t$ is {\em $r$-normal} if the number of $r$-removable nodes strictly between positions $t$ and $y$ is always not less than that of the $r$-addable nodes strictly between positions $t$ and $y$ for all $y > t$.

\subsection{Symmetric groups}

Let $n \in \mathbb{Z}^+$.  Let $\sym{n}$ denote the permutation group on $\{1,2,\dotsc, n\}$.  For $m \in \mathbb{Z}^+$ with $m < n$, we embed $\sym{m}$ into $\sym{n}$ as the subgroup fixing $\{m+1,\dotsc, n\}$ pointwise.

To each $\lambda \in \mathcal{P}_n$, there is an associated Specht module $S^{\lambda}$ of $\sym{n}$, which has a simple head $D^{\lambda}$ if $\lambda$ is $p$-regular.  The set $\{ D^{\lambda} \mid \lambda \in \mathcal{P}_n, \lambda \text{ $p$-regular} \}$ is a complete set of pairwise non-isomorphic irreducible $\F \sym{n}$-modules.

Two Specht modules $S^{\lambda}$ and $S^{\mu}$ lie in the same block if and only if $\lambda$ and $\mu$ have the same $p$-core and the same $p$-weight.  We may thus speak of the $p$-core and the $p$-weight of a block $B$ of $\F\sym{n}$.  Write $\mathcal{P}_B = \{\lambda \in \mathcal{P}_n \mid S^\lambda \text{ lies in } B \}$.

\subsection{Restriction and induction} \label{SS:resind}

Let $B$ be a block of $\mathbb{F}\sym{n}$, with $p$-weight $w$ and $p$-core $\kappa_B$ (so $\kappa_B \in \mathcal{P}_{n-wp}$).  Fix an abacus display of $\kappa_B$ with $s$ beads, and consider the $p$-core partition $\kappa_C$ having an abacus display in which all the runners have the same number of beads as those in $\kappa_B$ except for runners $i-1$ and $i$, where respectively there are $k$ beads more and $k$ beads less than those in $\kappa_B$.  We assume that $\kappa_C \in \mathcal{P}_m$ with $m \leq n-k$, so that there is a block of $\mathbb{F}\sym{n-k}$ with $p$-core $\kappa_C$; we denote this block as $C$.

Let $r$ be the residue class of $(i-s)$ modulo $p$.  For each partition $\lambda \in \mathcal{P}_B$ having $k$ or more $r$-normal beads, let $\Phi(\lambda)$ be the partition obtained from $\lambda$ by moving the $k$ topmost $r$-normal beads to their respective unoccupied preceding positions.  For each partition $\mu \in \mathcal{P}_C$ having $k$ or more $r$-conormal beads, let $\Psi(\mu)$ be the partition obtained from $\mu$ by moving the $k$ bottommost $r$-conormal beads to their respective unoccupied succeeding positions. Then $\Phi = \Phi_{B,C}$ and $\Psi = \Psi_{B,C}$ are inverses of each other, inducing a bijection between the partitions in $B$ having $k$ or more $r$-normal beads and the partitions in $C$ having $k$ or more $r$-conormal beads.

\begin{thm}[{\cite[Section 11.2]{Kleshbook}}] \label{T:Klesh}
Let $\lambda \in \mathcal{P}_B$ and $\mu \in \mathcal{P}_C$.
\begin{enumerate}
\item If $\lambda$ has $k$ or more $r$-normal beads, then $\lambda$ is $p$-regular if and only if $\Phi(\lambda)$ is.
\item If $\mu$ has $k$ or more $r$-conormal beads, then $\mu$ is $p$-regular if and only if $\Psi(\mu)$ is.
\item
\[\soc(D^\lambda \down{C}) =
\begin{cases}
(D^{\Phi(\lambda)})^{\oplus k!}, & \text{if $\lambda$ has $k$ or more $r$-normal beads}; \\
0, &\text{otherwise}.
\end{cases}
\]
In addition, $D^\lambda \down{C} = (D^{\Phi(\lambda)})^{\oplus k!}$ if and only if $\lambda$ has exactly $k$ $r$-normal beads.

\item
\[\soc(D^\mu \up{B}) =
\begin{cases}
(D^{\Psi(\mu)})^{\oplus k!}, & \text{if $\mu$ has $k$ or more $r$-conormal beads}; \\
0, &\text{otherwise}.
\end{cases}
\]
In addition, $D^\mu \up{B} = (D^{\Psi(\mu)})^{\oplus k!}$ if and only if $\mu$ has exactly $k$ $r$-conormal beads.
\end{enumerate}
\end{thm}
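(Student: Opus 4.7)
The plan is to derive the theorem from the $k=1$ case, which is Kleshchev's classical modular branching rule, by induction on $k$. Parts (ii) and (iv) will follow from parts (i) and (iii) by biadjointness of induction and restriction together with the fact that $\Phi_{B,C}$ and $\Psi_{B,C}$ are inverse bijections, so I focus on (i) and (iii). For each residue $r$ modulo $p$, I introduce the refined $r$-restriction functor $e_r$, the summand of $\down{\sym{n-1}}$ picking out those block components corresponding to removing a node of residue $r$, so that $\down{\sym{n-1}} = \bigoplus_r e_r$ on each block of $\F\sym{n}$. The $k = 1$ case of (iii) is then Kleshchev's theorem
\[
\soc(e_r D^\lambda) = (D^{\tilde e_r \lambda})^{\oplus \varepsilon_r(\lambda)},
\]
where $\tilde e_r \lambda$ is obtained from $\lambda$ by moving its topmost $r$-normal bead up one position on runner $i$, $\varepsilon_r(\lambda)$ counts the $r$-normal beads, and the equality $e_r D^\lambda = \soc(e_r D^\lambda)$ holds precisely when $\varepsilon_r(\lambda) \leq 1$. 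Part (i) at $k = 1$ is a direct combinatorial check on the abacus that $\tilde e_r$ preserves $p$-regularity.

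For general $k$, I would introduce the divided power functor $e_r^{(k)}$, the image of the symmetrising idempotent for the natural $\sym{k}$-action on the iterate $e_r^k$, satisfying $e_r^k \cong (e_r^{(k)})^{\oplus k!}$. The block restriction $\down{C}$ from $B$-mod is then naturally identified with $e_r^{(k)}$ (the target block $C$ is precisely the one reached by removing $k$ nodes of residue $r$), and the factor of $k!$ in the statement arises from this direct-sum decomposition. The key refined input, obtained by iterating the $k = 1$ socle formula and symmetrising, is that $e_r^{(k)} D^\lambda$ has simple socle $D^{\tilde e_r^k \lambda}$ whenever $\varepsilon_r(\lambda) \geq k$ (and zero socle otherwise), with the equality $e_r^{(k)} D^\lambda = D^{\tilde e_r^k \lambda}$ occurring exactly when $\varepsilon_r(\lambda) = k$. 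A combinatorial check on the abacus identifies $\tilde e_r^k \lambda$ with $\Phi_{B,C}(\lambda)$, completing (iii), and (i) follows since $\tilde e_r$ preserves $p$-regularity at each step.

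The main obstacle is the refined socle statement for $e_r^{(k)}$. Socles do not compose well under general exact functors, so a direct iteration of the $k = 1$ case only bounds the socle from one side. Extracting the precise multiplicity, and ensuring that only the single expected simple $D^{\tilde e_r^k \lambda}$ appears in $\soc(e_r^{(k)} D^\lambda)$, requires the full strength of Kleshchev's theory of good and normal nodes together with a careful analysis of the signed $r$-residue sequences of standard tableaux. Once this refined statement is in hand, (i) and (iii) drop out, and (ii) and (iv) follow by the Frobenius reciprocity argument above.
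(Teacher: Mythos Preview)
The paper does not prove this theorem: it is quoted from \cite[Section 11.2]{Kleshbook} and used as a black box throughout. There is therefore no proof in the paper to compare your sketch against.

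Your outline is, in spirit, the standard argument from Kleshchev's book, but two details are misstated and they matter for getting the $k!$ right. First, Kleshchev's $k=1$ branching rule says that $\soc(e_r D^\lambda)$ is the \emph{simple} module $D^{\tilde e_r\lambda}$ when $\varepsilon_r(\lambda)\geq 1$ (and zero otherwise); the integer $\varepsilon_r(\lambda)$ is the composition multiplicity $[e_r D^\lambda : D^{\tilde e_r\lambda}]$, not the socle multiplicity. Second, the block restriction $\down{C}$ from $B$ is the iterate $e_r^k$, not the divided power $e_r^{(k)}$: restricting from $\sym{n}$ to $\sym{n-k}$ and cutting to the block $C$ is exactly $k$ successive applications of $e_r$. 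The factor $k!$ then appears because $e_r^k \cong (e_r^{(k)})^{\oplus k!}$ and $e_r^{(k)}D^\lambda$ has \emph{simple} socle $D^{\tilde e_r^k\lambda}$, giving $\soc(D^\lambda\down{C}) \cong (D^{\tilde e_r^k\lambda})^{\oplus k!}$. With these two corrections your plan matches the derivation in \cite{Kleshbook}; parts (ii) and (iv) then follow as you say, either by the parallel theory for $f_r$ or by biadjointness together with self-duality of the $D^\lambda$.
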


Now suppose further that runner $i$ of the abacus display of $\kappa_B$ has exactly $k$ beads more than runner $i-1$.  Then $C$ also has $p$-weight $w$, and the abacus display of $\kappa_C$ can be obtained from that of $\kappa_B$ by interchanging the runners $i$ and $i-1$.  In this case, we call $(B,C)$ a $[w:k]$-pair.  Now, every partition in $\mathcal{P}_B$ has at least $k$ $r$-normal beads and every partition in $\mathcal{P}_C$ has at least $k$ $r$-conormal beads, so that $\Phi$ (as well as $\Psi$) is a bijection between the entire $\mathcal{P}_B$ and the entire $\mathcal{P}_C$.  In fact, most partitions in $\mathcal{P}_B$ has exactly $k$ $r$-normal beads, so that such $p$-regular partitions label the simple modules lying in $B$ that remain semi-simple upon restriction to $C$ --- we call these simple modules lying in $B$ {\em non-exceptional} (for the $[w:k]$-pair $(B,C)$), and the others {\em exceptional}.  Analogously, we define the exceptional and non-exceptional simple modules of $C$ (for $(B,C)$).  We note that the bijection $\Phi$ sends partitions labelling exceptional (resp.\ non-exceptional) simple modules of $B$ to partitions labelling exceptional (resp.\ non-exceptional) simple modules of $C$.

A necessary and sufficient condition for the absence of exceptional simple modules of $B$ and $C$ is $w \leq k$.  In this case, $B$ and $C$ are Morita equivalent \cite{Scopes} --- following Fayers \cite{F}, we shall say that $B$ and $C$ are Scopes equivalent --- and the effect of $\Phi$ on a partition in $B$ is merely to interchange runners $i$ and $i-1$ of its abacus display.  It is clear that Scopes equivalence can be extended to an equivalence relation on the set of all blocks of symmetric group algebras.

\begin{Def}
Let $\lambda$ be a $p$-regular partition with $p$-weight $w$.  We say that $D^{\lambda}$ {\em induces semi-simply} (resp.\ {\em restricts semi-simply}) to a block $B$ if and only if there exists a sequence $B_0,\dotsc, B_s$ of blocks such that
\begin{itemize}
\item $\lambda \in \mathcal{P}_{B_0}$;
\item $B_s = B$;
\item for each $1 \leq i \leq s$, $(B_i,B_{i-1})$ (resp.\ $(B_{i-1},B_i)$) is a $[w:k_i]$-pair for some $k_i \in \mathbb{Z}^+$;
\item for each $1 \leq j \leq s$, $(\Psi_{B_j,B_{j-1}} \circ \Psi_{B_{j-1},B_{j-2}} \circ \dotsb \circ \Psi_{B_1,B_0})(\lambda)$ (resp.\ $(\Phi_{B_{j-1},B_{j}} \circ \Phi_{B_{j-2},B_{j-1}} \circ \dotsb \circ \Phi_{B_0,B_1})(\lambda)$) is non-exceptional for $(B_j,B_{j-1})$ (resp.\ $(B_{j-1},B_j)$).
\end{itemize}
\end{Def}

\section{Rouquier blocks} \label{S:Rouq}

In this section, we show that every Rouquier block having $p$-weight $w$ with $w < p$ has the property that their simple modules have a common complexity $w$.

Let $B$ be a block of $\mathbb{F}\sym{n}$ with $p$-weight $w$.  We say that $B$ is a {\em Rouquier} block if the abacus display of its $p$-core has the following property:  for all $0 \leq i < j < p$, either runner $j$ has at least $w-1$ beads more than runner $i$, or runner $i$ has at least $w$ beads more than runner $j$.  It is easy to check that such a property is independent of the choice of abacus display of the $p$-core of $B$, and also not difficult to verify that the Rouquier blocks of a fixed weight form a single Scopes equivalence class.  These blocks were singled out by Rouquier in 1991 in his attempt to verify the truth of Brou\'e's Abelian defect group conjecture for symmetric groups.  He believed these blocks to have special properties, and in the case where $w < p$, he conjectured that they are Morita equivalent to the principal block of $\F(\sym{p} \wr \sym{w})$.  This conjecture was proved by Chuang and Kessar:

\begin{thm}[{\cite{CK}}] \label{T:Morita}
Let $B$ be a Rouquier block with $p$-weight $w$, where $w < p$.  Then $B$ is Morita equivalent to the principal block of $\F(\sym{p} \wr \sym{w})$.
\end{thm}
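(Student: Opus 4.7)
The plan is to reduce via Scopes equivalence to a single convenient Rouquier block, and there exhibit an explicit progenerator realising the equivalence. As noted in the discussion before the theorem, the Rouquier blocks of weight $w$ form a single Scopes equivalence class, and every Scopes equivalence is a Morita equivalence; hence it is enough to pick one such block $B \subseteq \F\sym{n}$, chosen so that the abacus display of its $p$-core $\kappa$ has runner gaps lying far in excess of the Rouquier threshold. The slack will simplify the Mackey analysis below.

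Write $m := |\kappa|$, so $n = m + pw$, and consider the Young subgroup $Y := \sym{m} \times (\sym{p} \wr \sym{w}) \leq \sym{n}$. The block $b_\kappa$ of $\F\sym{m}$ containing $S^\kappa$ is simple (since $\kappa$ is a $p$-core), so we may pick a primitive idempotent $i_\kappa \in b_\kappa$ with $i_\kappa \F\sym{m} i_\kappa \cong \F$. Let $e_0$ be the principal block idempotent of $\F(\sym{p} \wr \sym{w})$, write $B_0 := B_0(\F(\sym{p} \wr \sym{w}))$, and set $i := i_\kappa \otimes e_0 \in \F Y$. The proposed progenerator of $B$ is
\[
M \ := \ \F\sym{n} \cdot e_B \cdot i,
\]
viewed as a left $B$-module. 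Its endomorphism ring is $\operatorname{End}_B(M)^{\mathrm{op}} \cong i \cdot e_B \cdot \F\sym{n} \cdot e_B \cdot i$, and the goal reduces to showing that (a) this ring is isomorphic to $B_0$, and (b) every simple of $B$ occurs as a composition factor of the head of $M$ (so that $M$ is a progenerator).

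The key computation is (a). A Mackey-style decomposition expresses $i \cdot \F\sym{n} \cdot i$ as a direct sum indexed by $(Y,Y)$-double cosets in $\sym{n}$; the identity coset contributes exactly $i \F Y i = (i_\kappa \F\sym{m} i_\kappa) \otimes (e_0 \F(\sym{p} \wr \sym{w}) e_0) \cong \F \otimes B_0 = B_0$. The Rouquier hypothesis is precisely what forces every non-identity double coset to vanish after multiplication by $e_B$: for a non-trivial representative $g$, conjugation by $g$ moves beads of the abacus display of $\kappa$ across the widely-spaced runners into a configuration no longer in the $p$-core class of $\kappa$, so the Specht-module factor $S^\kappa$ is killed. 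This Mackey analysis is the main obstacle: it requires careful bookkeeping of how conjugation permutes abacus positions, and the assumption $w < p$ enters precisely here, since it forces the Sylow $p$-subgroup of $\sym{pw}$ to be elementary abelian of rank $w$ and hence confines the relevant bead motion to a single translation per runner. Once (a) is established, (b) follows from the abacus description of the maps $\Psi$ in Theorem \ref{T:Klesh}, which identify the simples of $B$ with those of $B_0$ via the $p$-quotient of weight $w$.
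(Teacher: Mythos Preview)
The paper does not prove this theorem; it is quoted from \cite{CK} and used as a black box, so there is no argument in the paper against which to compare your proposal.

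That said, your outline is broadly in the spirit of Chuang and Kessar's original proof, which does produce a bimodule from the inclusion of a suitable subgroup and verifies that it induces a Morita equivalence. However, your sketch has substantive gaps at exactly the points where the real work lies. The ``Mackey analysis'' you invoke is the entire content of the theorem, and your justification for why non-identity $(Y,Y)$-double cosets contribute nothing after cutting by $e_B$ is not an argument: a double-coset representative $g \in \sym{n}$ does not act on abacus displays, so the phrase ``conjugation by $g$ moves beads of the abacus display of $\kappa$ \dots\ into a configuration no longer in the $p$-core class of $\kappa$'' has no precise meaning as stated. In \cite{CK} the corresponding step is a careful computation with Brauer morphisms and source algebras, and it is precisely there that the Rouquier hypothesis on the core and the condition $w<p$ do genuine work. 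Your account of where $w<p$ enters --- that it makes the Sylow $p$-subgroup of $\sym{pw}$ elementary abelian and thereby ``confines the relevant bead motion to a single translation per runner'' --- conflates two separate issues and does not by itself force the non-trivial cosets to vanish. Finally, part~(b) is not a consequence of Theorem~\ref{T:Klesh}: that theorem describes socles of restrictions and inductions of \emph{simple} modules across a single $[w:k]$-pair, not the head of an induced projective $\F Y$-module, so a separate argument is required to see that every simple of $B$ occurs in the head of $M$.
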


As the complexities of modules are preserved under Morita equivalence, to obtain the complexity of simple modules of the Rouquier blocks with $p$-weight $w$, where $w < p$, it suffices to study the simple modules lying in the principal block of $\F(\sym{p} \wr \sym{w})$.  The latter may be constructed as follows (see \cite[Proposition 3.7]{wreath}).

The simple $\F\sym{w}$-modules remain simple when inflated to $\sym{p} \wr \sym{w}$ and lie in the principal block of $\F(\sym{p} \wr \sym{w})$.  Also if $S$ is a simple $\F\sym{p}$-module lying in its principal block, then $S^{\otimes w}$ is a simple $\F(\sym{p} \wr \sym{w})$-module lying in its principal block, where the base group $(\sym{p})^w$ acts component-wise and the quotient $\sym{w}$ permutes the components.  In general, the simple modules of $\F(\sym{p} \wr \sym{w})$ lying in its principal block have the following form:
\[
\left(\bigboxtimes_{i=1}^k (S_i^{\otimes a_i} \otimes_\F T_i) \right) \up{\sym{p} \wr \sym{w}}_{\prod_{i=1}^k (\sym{p} \wr \sym{a_i})}
\]
where $a_1,\dotsc, a_k \in \mathbb{Z}^+$ such that $\sum_{i=1}^k a_i = w$, $S_1,\dotsc, S_k$ are pairwise non-isomorphic simple $\F\sym{p}$-modules lying in its principal block, and $T_1,\dotsc, T_k$ are (possibly isomorphic) simple $\F\sym{a_i}$-modules inflated to $\sym{p} \wr \sym{a_i}$.

\begin{thm} \label{T:wreath}
Let $w < p$.  The simple modules lying in the principal block of $\F(\sym{p} \wr \sym{w})$ have a common complexity $w$.
\end{thm}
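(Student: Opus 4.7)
The plan is to reduce, using the functorial properties of complexity listed in Theorem \ref{T:complexity}, the problem to a small base case. Write a simple module of the principal block of $\F(\sym{p}\wr\sym{w})$ in the stated induced form
\[
M = \left(\bigboxtimes_{i=1}^k (S_i^{\otimes a_i}\otimes T_i)\right)\up{\sym{p}\wr\sym{w}}_{\prod_i(\sym{p}\wr\sym{a_i})}.
\]
Theorem \ref{T:complexity}(iv) strips off the induction, and iterating Theorem \ref{T:complexity}(vi) then yields
\[
c_{\sym{p}\wr\sym{w}}(M) = \sum_{i=1}^k c_{\sym{p}\wr\sym{a_i}}(S_i^{\otimes a_i}\otimes T_i).
\]
Since $\sum_i a_i = w$, the theorem will follow once I show that each summand equals $a_i$.

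Fix $i$ and write $a = a_i$, $S = S_i$, $T = T_i$. Let $E \cong (C_p)^a$ be the elementary Abelian subgroup of the base group $(\sym{p})^a \subset \sym{p}\wr\sym{a}$ generated by one fixed $p$-cycle in each of the $a$ copies of $\sym{p}$. Since $a < p$, the $p$-part of $|\sym{p}\wr\sym{a}|$ is $p^a$, so $E$ is a Sylow $p$-subgroup of $\sym{p}\wr\sym{a}$, and being elementary Abelian, it is the unique maximal elementary Abelian $p$-subgroup up to conjugacy. Theorem \ref{T:complexity}(iii) therefore gives
\[
c_{\sym{p}\wr\sym{a}}(S^{\otimes a}\otimes T) = c_E((S^{\otimes a}\otimes T)\down{E}).
\]
As $T$ is inflated from the quotient $\sym{a} = (\sym{p}\wr\sym{a})/(\sym{p})^a$, the base group, and hence $E$, acts trivially on $T$; consequently
\[
(S^{\otimes a}\otimes T)\down{E} \cong \bigl((S\down{C_p})^{\boxtimes a}\bigr)^{\oplus \dim T}
\]
as $\F E$-modules. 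Applying Theorem \ref{T:complexity}(v) to absorb the direct sum and Theorem \ref{T:complexity}(vi) to the $a$-fold outer tensor product over $E \cong C_p \times \dotsb \times C_p$, one obtains $c_E((S^{\otimes a}\otimes T)\down{E}) = a\cdot c_{C_p}(S\down{C_p})$.

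To conclude, $S$ is a non-projective simple $\F\sym{p}$-module, since the principal block of $\F\sym{p}$ has positive defect and hence contains no projective simple module. Thus $c_{\sym{p}}(S) \geq 1$, while the fact that $C_p$ is the Sylow $p$-subgroup of $\sym{p}$ bounds $c_{\sym{p}}(S)$ above by $1$; Theorem \ref{T:complexity}(iii) applied once more identifies $c_{\sym{p}}(S)$ with $c_{C_p}(S\down{C_p})$, which is therefore equal to $1$. This gives each summand $c_{\sym{p}\wr\sym{a_i}}(S_i^{\otimes a_i}\otimes T_i) = a_i$, and summing produces $c_{\sym{p}\wr\sym{w}}(M) = w$, independent of the choice of simple module. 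I do not anticipate a serious obstacle; the only subtlety is confirming that after restriction to $E$ the inflated factor $T$ disappears except through its dimension, and that the remaining $S^{\otimes a}\down{E}$ really does coincide with the outer tensor product $(S\down{C_p})^{\boxtimes a}$, so that no dependence on the $T_i$ survives in the final answer.
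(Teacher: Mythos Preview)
Your proof is correct and follows essentially the same route as the paper's: strip the induction via Theorem~\ref{T:complexity}(iv), split the outer tensor product via (vi), pass to the elementary Abelian Sylow $p$-subgroup $E$ of each $\sym{p}\wr\sym{a_i}$ via (iii), note that $T_i$ becomes trivial upon restriction to $E$ so that only $\dim(T_i)$ copies of $S_i^{\boxtimes a_i}$ remain, and finish with $c_{C_p}(S_i)=1$. The paper's argument is identical in structure; your version simply supplies a little more justification for the last equality.
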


\begin{proof}
We have, by Theorem \ref{T:complexity}(iii,iv,vi),
\begin{align*}
c_{\sym{p} \wr\sym{w}}\left( \left(\bigboxtimes_{i=1}^k (S_i^{\otimes a_i} \otimes_\F T_i) \right) \up{\sym{p} \wr \sym{w}}_{\prod_{i=1}^k (\sym{p} \wr \sym{a_i})} \right)
&= \sum_{i=1}^k c_{\sym{p} \wr \sym{a_i}} (S_i^{\otimes a_i} \otimes_\F T_i) \\
&= \sum_{i=1}^k c_{E_{a_i}} (S_i^{\otimes a_i} \otimes_\F T_i)
\end{align*}
where for each $i$, $E_{a_i}$ is a Sylow $p$-subgroup of $\sym{p} \wr \sym{a_i}$, which is elementary Abelian.  We may choose $E_{a_i}$ to be a subgroup of the base group $(\sym{p})^{a_i}$.  Then $E_{a_i}$ acts trivially on $T_i$, so that as $\F E_{a_i}$-modules $S_i^{\otimes a_i} \otimes_\F T_i$ is isomorphic to $\dim(T_i)$ copies of $S_i^{\otimes a_i}$.  Thus by Theorem \ref{T:complexity}(v,vi), we have
\[
c_{E_{a_i}}( S_i^{\otimes a_i} \otimes_\F T_i) = c_{E_{a_i}}(S_i^{\otimes a_i}) = c_{E_{a_i}}(S_i^{\boxtimes a_i}) = a_i \cdot c_{C_p}(S_i) = a_i
\]
since $c_{C_p}(S_i) = c_{\sym{p}}(S_i) = 1$, as is well known.
\end{proof}

\begin{thm} \label{T:Rouq}
Let $w < p$.  The simple modules lying in the Rouquier block with $p$-weight $w$ have a common complexity $w$.
\end{thm}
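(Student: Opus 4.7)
The plan is to deduce this theorem directly from the two preceding results in this section. Given any Rouquier block $B$ of $p$-weight $w$ with $w < p$, Theorem \ref{T:Morita} of Chuang and Kessar provides a Morita equivalence between $B$ and the principal block $B_0$ of $\F(\sym{p} \wr \sym{w})$. Such an equivalence induces a bijection between isomorphism classes of simple $B$-modules and simple $B_0$-modules, and preserves the complexity of every module. Consequently the multiset of complexities of simple modules of $B$ coincides with that of simple modules of $B_0$, which by Theorem \ref{T:wreath} consists entirely of copies of $w$.

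The only point that warrants a brief justification is the invariance of complexity under Morita equivalence. This follows because a Morita equivalence sends projective covers to projective covers, hence minimal projective resolutions to minimal projective resolutions, and the $\F$-dimensions of corresponding projective modules on the two sides are bounded by a constant multiple of each other (via a fixed progenerator). Therefore the rates of polynomial growth of the terms in a minimal projective resolution agree on the two sides, so the minimum $n$ in the definition of complexity is the same. Since all the substantive content has been isolated into Theorems \ref{T:Morita} and \ref{T:wreath}, no further obstacle arises; the proof is a one-line assembly of these two facts.
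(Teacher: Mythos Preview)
Your proposal is correct and follows exactly the paper's approach: the paper's proof is the single sentence ``This follows immediately from Theorems \ref{T:Morita} and \ref{T:wreath}.'' Your added justification that Morita equivalence preserves complexity is a reasonable elaboration of what the paper leaves implicit.
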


\begin{proof}
This follows immediately from Theorems \ref{T:Morita} and \ref{T:wreath}.
\end{proof}

The complexities of some other simple modules lying in other blocks with $p$-weight $w$, where $w <p$, can be obtained using Theorem \ref{T:Rouq} and the following proposition.

\begin{prop} \label{P:same}
Let $H$ be a subgroup of a finite group $G$.  Suppose that $M$ and $N$ are finitely generated $\F G$- and $\F H$-modules respectively, satisfying the following:
\begin{align*}
M \mid N \up{G}, \qquad
N \mid M \down{H}.
\end{align*}
Then $c_G(M) = c_H(N)$.
\end{prop}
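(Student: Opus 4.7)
The plan is to prove the two inequalities $c_G(M) \leq c_H(N)$ and $c_H(N) \leq c_G(M)$ separately, relying on parts (iv) and (v) of Theorem \ref{T:complexity} together with the standard observation that restriction does not increase complexity: for any finitely generated $\F G$-module $X$, one has $c_H(X\down{H}) \leq c_G(X)$. This latter fact follows because any minimal projective $\F G$-resolution $\dotsb \to P_1 \to P_0 \to X$ remains exact upon restriction to $H$, and projective $\F G$-modules restrict to projective $\F H$-modules (since $\F G$ is free as a left $\F H$-module of rank $[G:H]$); hence the terms of a minimal projective $\F H$-resolution of $X\down{H}$ have dimensions growing no faster than $\dim_\F P_r$.

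The forward inequality is immediate from the hypotheses: since $M \mid N\up{G}$, part (v) of Theorem \ref{T:complexity} gives $c_G(M) \leq c_G(N\up{G})$, and part (iv) then equates $c_G(N\up{G})$ with $c_H(N)$, yielding $c_G(M) \leq c_H(N)$.

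For the reverse direction, the hypothesis $N \mid M\down{H}$ together with Theorem \ref{T:complexity}(v) gives $c_H(N) \leq c_H(M\down{H})$, which is in turn bounded above by $c_G(M)$ by the restriction observation in the first paragraph. The argument is thus a short chain of inequalities, and there is no real obstacle once these three ingredients are in hand; the only step that requires justification outside of Theorem \ref{T:complexity} is the restriction bound, which is entirely formal.
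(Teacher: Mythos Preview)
Your proof is correct and follows the same chain of inequalities as the paper: $c_H(N) = c_G(N\up{G}) \geq c_G(M) \geq c_H(M\down{H}) \geq c_H(N)$. The only cosmetic difference is that the paper justifies the bound $c_G(M) \geq c_H(M\down{H})$ by invoking Theorem~\ref{T:complexity}(iii), whereas you argue it directly from projective resolutions; both are standard.
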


\begin{proof}
By Theorem \ref{T:complexity}(iii,iv,v),
\[
c_H(N) = c_G(N \up{G}) \geq c_G(M) \geq c_H(M \down{H}) \geq c_H(N),\]
forcing equality throughout.
\end{proof}

\begin{cor} \label{C:same}
Let $(B,C)$ be a $[w:k]$-pair, and let $D^{\lambda}$ be a non-exceptional simple module lying in $B$.  Then $D^{\lambda}$ and $D^{\Phi(\lambda)}$ have the same complexity.
\end{cor}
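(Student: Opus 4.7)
The plan is to apply Proposition \ref{P:same} with $G = \sym{n}$ (the ambient group of the block $B$), $H = \sym{n-k}$ (the ambient group of the block $C$), $M = D^{\lambda}$, and $N = D^{\Phi(\lambda)}$. This reduces the problem to establishing the two summand relations $D^{\lambda} \mid D^{\Phi(\lambda)}\up{\sym{n}}$ and $D^{\Phi(\lambda)} \mid D^{\lambda}\down{\sym{n-k}}$, both of which I expect to read off the ``in addition'' halves of Theorem \ref{T:Klesh}(iii) and (iv).

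For the restriction direction, non-exceptionality of $D^{\lambda}$ for the $[w:k]$-pair $(B,C)$ means, by the definition given in Subsection \ref{SS:resind}, that $\lambda$ has exactly $k$ $r$-normal beads. The ``in addition'' clause of Theorem \ref{T:Klesh}(iii) then yields $D^{\lambda}\down{C} = (D^{\Phi(\lambda)})^{\oplus k!}$. Since $D^{\lambda}\down{C}$ is the block-$C$ component of $D^{\lambda}\down{\sym{n-k}}$, it is a direct summand of the full restriction, so $D^{\Phi(\lambda)}$ is a summand of $D^{\lambda}\down{\sym{n-k}}$ as required.

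For the induction direction, Subsection \ref{SS:resind} records that $\Phi$ sends labels of non-exceptional simple modules of $B$ to labels of non-exceptional simple modules of $C$; equivalently, $\Phi(\lambda)$ has exactly $k$ $r$-conormal beads. The ``in addition'' clause of Theorem \ref{T:Klesh}(iv), taken with $\mu = \Phi(\lambda)$ (so that $\Psi(\mu) = \lambda$), then gives $D^{\Phi(\lambda)}\up{B} = (D^{\lambda})^{\oplus k!}$, which is a summand of $D^{\Phi(\lambda)}\up{\sym{n}}$, furnishing $D^{\lambda} \mid D^{\Phi(\lambda)}\up{\sym{n}}$. Feeding both inclusions into Proposition \ref{P:same} then yields $c_{\sym{n}}(D^{\lambda}) = c_{\sym{n-k}}(D^{\Phi(\lambda)})$ at once. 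I do not anticipate a substantive obstacle here: the argument is essentially a bookkeeping exercise matching the combinatorial definition of ``non-exceptional'' (exactly $k$ normal or conormal beads) with the semi-simple restriction and induction formulae already supplied by Theorem \ref{T:Klesh}.
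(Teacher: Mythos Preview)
Your proposal is correct and follows exactly the paper's approach: the paper's proof consists of the single sentence ``Note that $D^{\lambda}$ and $D^{\Phi(\lambda)}$ satisfy the hypothesis of $M$ and $N$ in Proposition \ref{P:same},'' and your argument simply unpacks this claim using Theorem \ref{T:Klesh}(iii,iv) and the definition of non-exceptional.
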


\begin{proof}
Note that $D^{\lambda}$ and $D^{\Phi(\lambda)}$ satisfy the hypothesis of $M$ and $N$ in Proposition \ref{P:same}.
\end{proof}

\begin{cor} \label{C:inducetoRouq}
Let $\lambda$ be a $p$-regular partition of $n$ with $p$-weight $w$, where $w < p$.  If $D^{\lambda}$ induces semi-simply to a Rouquier block, then $c_{\sym{n}}(D^{\lambda}) = w$.
\end{cor}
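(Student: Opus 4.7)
The plan is to iterate Corollary \ref{C:same} along the chain of $[w:k]$-pairs guaranteed by the hypothesis, eventually landing in a Rouquier block where Theorem \ref{T:Rouq} applies directly.

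First I would unpack the definition of ``induces semi-simply'' to extract a sequence of blocks $B_0, B_1, \dotsc, B_s$ with $\lambda \in \mathcal{P}_{B_0}$, with $B_s$ a Rouquier block of $p$-weight $w$, and with each $(B_i, B_{i-1})$ a $[w:k_i]$-pair. Set $\lambda_0 = \lambda$ and inductively $\lambda_i = \Psi_{B_i, B_{i-1}}(\lambda_{i-1})$ for $1 \leq i \leq s$, so that $\lambda_i \in \mathcal{P}_{B_i}$. By the definition, each $\lambda_i$ is non-exceptional for $(B_i, B_{i-1})$, and since $\Phi_{B_i, B_{i-1}}$ and $\Psi_{B_i, B_{i-1}}$ are mutually inverse, $\Phi_{B_i, B_{i-1}}(\lambda_i) = \lambda_{i-1}$.

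Next I would apply Corollary \ref{C:same} to each pair $(B_i, B_{i-1})$, which produces the equality of complexities $c(D^{\lambda_i}) = c(D^{\lambda_{i-1}})$ (the ambient symmetric group in each instance being whichever one the relevant block inhabits). Chaining these equalities from $i=1$ up to $i=s$ gives $c_{\sym{n}}(D^\lambda) = c(D^{\lambda_s})$. Finally, $D^{\lambda_s}$ is a simple module lying in the Rouquier block $B_s$ of $p$-weight $w < p$, so Theorem \ref{T:Rouq} yields $c(D^{\lambda_s}) = w$, completing the argument.

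There is no genuine obstacle here: the heavy lifting has been done upstream by Corollary \ref{C:same} (which rests on Proposition \ref{P:same}) together with Theorem \ref{T:Rouq}. The only care required is in the bookkeeping of the chain of $\Phi$/$\Psi$ maps, and in verifying at each step that the non-exceptional hypothesis demanded by Corollary \ref{C:same} is exactly the non-exceptional condition already built into the definition of ``induces semi-simply''.
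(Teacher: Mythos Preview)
Your argument is correct and is exactly the approach the paper takes: it simply states that the result follows immediately from Theorem~\ref{T:Rouq} and Corollary~\ref{C:same}, and your write-up merely unpacks this by tracking the chain of $[w:k]$-pairs explicitly.
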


\begin{proof}
This follows immediately from Theorem \ref{T:Rouq} and Corollary \ref{C:same}.
\end{proof}

We also note the existence of simple modules labelled by partitions with $p$-weight $w$ ($w \geq 2$) having complexity less than $w$, where possibly $w \geq p$.

\begin{prop} \label{P:comp<}
Let $w \in \mathbb{Z}$, $w \geq 2$, and assume that $p$ is odd.  Then $$c_{\sym{wp}}(D^{((w-1)p+1,1^{p-1})}) < w.$$
\end{prop}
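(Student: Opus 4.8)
The plan is to prove the equivalent statement that the rank variety of $D := D^{((w-1)p+1,1^{p-1})}$ over a maximal‑rank elementary Abelian $p$-subgroup of $\sym{wp}$ is a proper subvariety. First I would isolate the only relevant subgroup. By Theorem~\ref{T:complexity}(iii), $c_{\sym{wp}}(D) = \max_E c_E(D)$ as $E$ ranges over maximal elementary Abelian $p$-subgroups, and $c_E(D)\le\operatorname{rank}(E)\le\lfloor wp/p\rfloor = w$. Writing the orbits of such an $E$ with sizes $p^{d_1},\dotsc,p^{d_t}$, so $\sum_j p^{d_j}=wp$, faithfulness forces $E\hookrightarrow\prod_j E/\ker_j$ and hence $\operatorname{rank}(E)\le\sum_j d_j$. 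Since $p$ is odd we have $p^{d}>pd$ whenever $d\ge 2$, so $p\sum_j d_j\le\sum_j p^{d_j}=wp$, giving $\operatorname{rank}(E)\le\sum_j d_j\le w$ with equality only when every $d_j\in\{0,1\}$ and there are no fixed points. Thus the unique elementary Abelian $p$-subgroup of rank $w$ up to conjugacy is $E_0=\langle g_1,\dotsc,g_w\rangle$ with the $g_i$ disjoint $p$-cycles, while every other $E$ has rank $<w$ and so $c_E(D)<w$ automatically. It therefore suffices to show $c_{E_0}(D)<w$, i.e. $V^{\#}_{E_0}(D)\ne\F^w$.

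For this it is enough to exhibit a single $\alpha\in\F^w$ with all coordinates nonzero for which $D\down{\langle u_\alpha\rangle}$ is free, since then $V^{\#}_{E_0}(D)$ avoids the nonempty open locus where no coordinate vanishes and so has dimension at most $w-1$. Equivalently, I must compute the generic Jordan type of $D$ over such a cyclic subgroup and verify that it is free. The engine is that the natural permutation module $V=\F^{wp}$ restricts freely, $V\down{\langle u_\alpha\rangle}\cong J_p^{\oplus w}$ (where $J_m$ denotes the $m$-dimensional indecomposable $\F\langle u_\alpha\rangle$-module); consequently the sum-zero Specht module $U=S^{(wp-1,1)}$, sitting in $0\to U\to V\to\F\to 0$, satisfies $U\down{\langle u_\alpha\rangle}\cong J_{p-1}\oplus J_p^{\oplus(w-1)}$ for generic $\alpha$.

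The bridge to $D$ is the classical isomorphism $S^{((w-1)p+1,1^{p-1})}\cong\E{p-1}(U)$ together with Peel's theorem: because $p\mid wp$, this Specht module is uniserial with head $D$ and socle $D^{((w-1)p+2,1^{p-2})}$. Restricting to $\langle u_\alpha\rangle$, I would evaluate $\E{p-1}(U)\down{\langle u_\alpha\rangle}=\E{p-1}(J_{p-1}\oplus J_p^{\oplus(w-1)})$ explicitly, lifting each $J_m$ to the $(m{-}1)$-st symmetric power of the natural $SL_2$-module and reducing the resulting constituents modulo $p$ (a single $\operatorname{Sym}^\mu$ breaks into $\lfloor\mu/p\rfloor+1$ Jordan blocks), and doing the same for the socle constituent. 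The aim is to show that the non-free parts of $S^{((w-1)p+1,1^{p-1})}\down{\langle u_\alpha\rangle}$ and of $D^{((w-1)p+2,1^{p-2})}\down{\langle u_\alpha\rangle}$ coincide, which forces the quotient $D\down{\langle u_\alpha\rangle}$ to be free.

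The main obstacle is precisely this last step: Jordan types are not additive along non-split short exact sequences, so the restrictions of the Specht module and of its socle do not by themselves determine the restriction of the simple quotient $D$. I expect to resolve this by a downward induction along the chain of hooks $\nu_j=((w-1)p+j,1^{p-j})$, anchored at the trivial module $D^{\nu_p}$ and at the clean case $D^{\nu_{p-1}}=D^{(wp-1,1)}$, whose generic restriction $J_{p-2}\oplus J_p^{\oplus(w-1)}$ I can already pin down from the filtration $\F\subset U\subset V$; the inductive step requires controlling the relevant extension class over $\langle u_\alpha\rangle$, for which I would either realise $D$ as the image of an explicit map between exterior powers of the freely‑restricting permutation module $V$, or invoke James's description of the hook modules. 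Gaining control of the \emph{simple} module's restriction, rather than the Specht module's, is the crux of the argument.
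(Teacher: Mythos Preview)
Your reduction to the single conjugacy class of rank-$w$ elementary Abelian $p$-subgroups $E_0=\langle g_1,\dotsc,g_w\rangle$ is correct and is where the paper begins as well. But the second half of your plan---computing the generic Jordan type of $D$ via exterior powers of the permutation module and a downward induction along the hooks $\nu_j$---is both much harder than necessary and, as you yourself acknowledge, incomplete at the decisive point. Passing from the Jordan types of the Specht module $S^{((w-1)p+1,1^{p-1})}$ and of its socle to that of the simple quotient $D$ across a non-split extension is exactly where the content lies, and your proposed induction faces the \emph{same} obstruction at every step $\nu_j\to\nu_{j-1}$, so as stated it does not close the gap. (A minor aside: exhibiting one $\alpha\notin V^{\#}_{E_0}(D)$ already shows the closed cone is proper and hence of dimension $\le w-1$; it does not, as your phrasing suggests, force the variety to miss the whole open torus.)

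The paper sidesteps all of this with a single observation you overlooked. By the branching rule (Theorem~\ref{T:Klesh}(iii)) the restriction $D^{\lambda}\down{\sym{wp-1}}$ is itself simple, equal to $D^{((w-1)p,\,1^{p-1})}$, and the partition $((w-1)p,1^{p-1})$ has $p$-weight only $w-2$. Consequently $c_{\sym{wp-1}}(D^{\lambda})\le w-2$. Now take $E'=\langle g_1,\dotsc,g_{w-1}\rangle\le E_0\cap\sym{wp-1}$. If $c_{E_0}(D^{\lambda})=w$ then $V^{\#}_{E_0}(D^{\lambda})=\F^{w}$, whence $V^{\#}_{E'}(D^{\lambda})=\F^{w-1}$ and $c_{E'}(D^{\lambda})=w-1$; but $c_{E'}(D^{\lambda})\le c_{\sym{wp-1}}(D^{\lambda})\le w-2$, a contradiction. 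No Jordan-type computation is required at all.
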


\begin{proof}
Let $\lambda = ((w-1)p+1,1^{p-1})$.
Note first that $D^{\lambda} \down{\sym{wp-1}} = D^{((w-1)p, 1^{p-1})}$ by Theorem \ref{T:Klesh}(iii).  Since $((w-1)p, 1^{p-1})$ has $p$-weight $w-2$, we thus have (see \cite[Section 2, last sentence]{Lim2})
$$c_{\sym{wp-1}}(D^{\lambda}) \leq w-2.$$

Let $E$ be a maximal elementary Abelian $p$-subgroup of $\sym{wp}$.  Then $E \cong (C_p)^k$ with $k \leq w$, so that $V^{\#}_E(D^{\lambda})$ is an affine subvariety of $\F^k$, and $c_E(D^{\lambda}) \leq k$.  If $c_E(D^{\lambda}) = w$, then $k = w$ and $V^{\#}_E(D^{\lambda}) = \F^w$, so that $V^{\#}_{E'}(D^{\lambda}) = \F^{w-1}$ for any subgroup $E'$ of $E$ of order $p^{w-1}$.  Furthermore, since $p$ is odd, $E$ is generated by $w$ disjoint $p$-cycles.  As such, we may choose $E'$ to be a subgroup of both $E$ and $\sym{wp-1}$, and obtain the following contradiction:
$$
w-1 = c_{E'}(D^{\lambda}) \leq c_{\sym{wp-1}}(D^{\lambda}) \leq w-2.
$$
Thus $c_{\sym{wp}} (D^{\lambda}) = \max_E(c_E(D^{\lambda})) \leq w-1$.
\end{proof}

\begin{rem} Proposition \ref{P:comp<} does not hold in general when $p=2$. Let $E=\langle (12)(34),(13)(24)\rangle\subseteq \sym{4}$. Then $D^{(3,1)}\down{E}\cong \F\oplus \F$ and hence $c_{\sym{4}}(D^{(3,1)})=2$.
\end{rem}

\section{Defect two blocks} \label{S:wt2}

In this section, we study the defect two blocks of symmetric group algebras.  These blocks have $p$-weight $2$, where $p$ is odd, and are fairly well understood by the work of many authors, though the complexities of their simple modules have hitherto not been computed.  For our purposes, \cite{ST} and \cite{Mikaelian} are the most useful references.

There are several different labellings for partitions having $p$-weight $2$.  The one that suits our purpose most is that used in \cite{ST}, as follows.

\begin{Def}
Let $\lambda$ be a partition having $p$-weight $2$.
\begin{itemize}
\item If the abacus display of $\lambda$ has a bead at position $x$ and a bead at position $x-p$, while position $x-2p$ is vacant, and there are exactly $a$ vacant positions strictly between $x$ and $x-p$, and $b$ vacant positions strictly between $x-p$ and $x-2p$, then
    $$\lambda = [a,b] \quad \text{and} \quad \e{\lambda} = 0.$$

\item If the abacus display of $\lambda$ has a bead at position $x$ which two vacant positions above it, i.e.\ positions $x-p$ and $x-2p$ are vacant, and there are exactly $a$ vacant positions strictly between $x$ and $x-p$, and $b$ vacant positions between $x-p$ and $x-2p$, inclusive of $x-p$ but not $x-2p$, then
    $$\lambda = [a,b] \quad \text{and} \quad \e{\lambda} = 1.$$

\item If the abacus display of $\lambda$ has two beads, at positions $x$ and $y$ say, with $x > y$, $x \not\equiv y \pmod p$, each with a vacant position above it, i.e.\ positions $x-p$ and $y-p$ are vacant, and there are exactly $a$ vacant positions strictly between $x$ and $x-p$, and $b$ vacant positions strictly between $y$ and $y-p$, then
    $$\lambda = [a,b] \quad \text{and} \quad
\e\lambda =
\begin{cases}
1, &\text{if }x-p < y < x; \\
0, &\text{otherwise.}
\end{cases}
$$
\end{itemize}
\end{Def}

When we need to emphasize that $[a,b]$ is a partition in the weight $2$ block $B$, we write it as $[a,b]_B$.  Clearly, this labelling of $\lambda$ is independent of the abacus used to display $\lambda$.

\begin{eg} \hfill \label{E:eg}
\begin{enumerate}
\item Let $B$ be a Rouquier block having $p$-weight $2$, and fix an abacus display of its $p$-core which has at least two beads on each runner.  No two runners on this abacus have the same number of beads.  Let $(j_0,j_1,\dotsc, j_{p-1})$ be a permutation of $(0,1,\dotsc, p-1)$ such that runner $j_i$ has more beads than runner $j_{i-1}$ for all $1 \leq i <p$.  The partition obtained from its $p$-core by sliding the bottom bead on runner $j_a$ down two positions is labelled by $[a,a+1]_B$.  The partition obtained by sliding two distinct beads on runner $j_a$ and $j_b$ (with $a \geq b$) down one position each is labelled by $[a,b]_B$.  The $p$-singular partitions are $[0,1]_B$ and $[0,0]_B, [1,0]_B,\dotsc, [p-1,0]_B$.  Also,
\[
\e{[a,b]_{B}} = \begin{cases}
1, &\text{if } b = a+1;\\
0, &\text{otherwise.}
\end{cases}
\]
\item Let $B_0$ be the principal block of $\F\sym{2p}$.  We display the partitions in $B_0$ on an abacus having two beads on each runner.  The partition obtained from its $p$-core by sliding the bottom bead on runner $a$ down two positions is labelled by $[p-1,a+1]_{B_0}$.  The partition obtained by sliding one bead on runner $a$ and one bead on runner $b$ ($a > b$) down one position each is labelled by $[a-1,b+1]_{B_0}$.  The partition obtained by sliding two beads on runner $a$ down one position each is labelled by $[a,0]_{B_0}$.  Also,
\[
\e{[a,b]_{B_0}} = \begin{cases}
0, &\text{if } b = 0;\\
1, &\text{otherwise.}
\end{cases}
\]
\end{enumerate}
\end{eg}

This labelling is first introduced as $\lambda_B(a,b)$ in \cite[Proposition 98]{CTu}.  While it has the disadvantage that the determination of the actual partition from a given label $[a,b]_B$ may not be straightforward, its importance lies in the fact that over a $[2:k]$-pair, it is invariant under the action of $\Phi$.  More precisely,

\begin{lem}[{\cite[Lemmas 5.9 and 5.13]{ST}}] \label{L:invar}
Suppose that $(B,C)$ is a $[2:k]$-pair.  Let $\lambda \in \mathcal{P}_B$, and write $\Phi$ for $\Phi_{B,C}$.
\begin{enumerate}
\item If $\lambda = [a,b]_B$, then $\Phi(\lambda) = [a,b]_C$.
\item $\e\lambda \ne \e{\Phi(\lambda)}$ if and only if $k=1$ and ($\lambda$ is $p$-regular and) $D^\lambda$ is the unique simple module lying in $B$ that is exceptional for $(B,C)$, in which case $\e\lambda = 0$ and $\e{\Phi(\lambda)} = 1$.
\end{enumerate}
\end{lem}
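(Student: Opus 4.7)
The plan is to argue directly on the abacus. In a $[2:k]$-pair the abacus of $\kappa_C$ is obtained from that of $\kappa_B$ by interchanging runners $i-1$ and $i$, and for a non-exceptional $\lambda \in \mathcal{P}_B$ the abacus of $\Phi(\lambda)$ is the same runner interchange applied to that of $\lambda$; the (at most one) exceptional $\lambda$ differs from a pure swap by an extra rearrangement of the labelled beads. Since $\lambda$ has $p$-weight $2$, its abacus differs from that of $\kappa_B$ by at most two downward bead slides, which places $\lambda$ into one of the three shapes (a)--(c) from the definition preceding the lemma.

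For (i) I would proceed by case analysis on the shape of $\lambda$ and, within each shape, on the runners that carry the labelled beads. The key observation is that the vacancy count $a$ (respectively $b$) within a length-$p$ window on the abacus can be computed as the number of runners contributing no bead at the appropriate row inside that window. When the labelled beads sit on runners other than $i,i-1$, the interchange of runners $i-1$ and $i$ swaps the occupation statuses of the two positions in the window lying on these runners, so the total vacancy count is preserved. When a labelled bead lies on runner $i$ or $i-1$, one tracks the bead through $\Phi$, identifies its image in $\Phi(\lambda)$, and carries out the analogous window count on the new abacus, in each case recovering $a$ and $b$ and establishing $\Phi(\lambda)=[a,b]_C$.

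For (ii) I would observe that $\e{\lambda}$ is read off from the shape: it equals $0$ in (a), $1$ in (b), and is controlled by the interlacing $x-p<y<x$ in (c). In the non-exceptional regime $\Phi$ is the runner swap, and both shape and interlacing are preserved except possibly when $\lambda$ is of shape (c) with one labelled bead on runner $i$ and the other on runner $i-1$; such a swap can cross the two beads and flip the interlacing. I would then verify combinatorially that this configuration is precisely the one in which $\lambda$ has two $r$-normal beads, making $D^\lambda$ exceptional for $(B,C)$; by Theorem \ref{T:Klesh} this forces $w>k$, hence $k=1$. A final inspection of this unique exceptional configuration shows that the labelled beads of $\lambda$ are non-interlaced (so $\e{\lambda}=0$) while those of $\Phi(\lambda)$ are interlaced (so $\e{\Phi(\lambda)}=1$), fixing the direction of the flip.

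The main obstacle I expect is in part (i): when a labelled bead sits on runner $i$ or $i-1$, it is moved under $\Phi$ and must be correctly re-identified in the abacus of $\Phi(\lambda)$ before the vacancy counts can be compared. This requires careful tracking of bead positions across the swap, particularly in the exceptional configuration where $\Phi$ does not act purely as the runner interchange.
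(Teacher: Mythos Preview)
The paper does not prove this lemma at all: it is stated with a citation to \cite[Lemmas 5.9 and 5.13]{ST} and used as a black box. Your proposal, by contrast, sketches a direct abacus argument. That is the natural route, and it is presumably close in spirit to what the cited reference does; so there is no meaningful ``paper's proof'' to compare against here beyond noting that the authors chose to import the result rather than redo the combinatorics.

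On the substance of your sketch: the plan for (i) is sound, and your key observation --- that the vacancy counts $a,b$ are runner-occupation counts within a length-$p$ window, hence invariant under swapping two runners --- is exactly the right engine. The case where a labelled bead sits on runner $i$ or $i-1$ does need the careful tracking you flag, but it goes through.

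Your argument for (ii) has a small logical wobble. You write that the crossing configuration in shape (c) is ``precisely the one in which $\lambda$ has two $r$-normal beads, making $D^\lambda$ exceptional'', and then deduce $k=1$ from $w>k$. But having two $r$-normal beads is only exceptional when $k=1$; when $k\geq 2$ it is the generic (non-exceptional) count. What you actually need to check first is that for $k\geq 2$ the crossing configuration \emph{cannot occur}: since runner $i$ carries at least two more beads than runner $i-1$ in the core, a total of two downward slides cannot produce interlaced labelled beads on those two runners. Once that is established, the $k=1$ analysis you outline (identifying the unique exceptional $\lambda$, checking it is non-interlaced while $\Phi(\lambda)$ is interlaced) is correct, though you should also rule out any shape change --- e.g.\ (a)$\leftrightarrow$(c) --- in the exceptional case, not just an interlacing flip within shape (c).
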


In particular, since every block can be restricted through a sequence of $[2:k]$-pairs to the principal block of $\F\sym{2p}$, Lemma \ref{L:invar}(i) and Example \ref{E:eg}(ii) show in particular that each $[a,b]_B$ labels a unique partition in $\mathcal{P}_B$.  Lemma \ref{L:invar} also gives the following immediate corollary which, though not explicitly stated in \cite{ST}, is well known among experts.

\begin{cor} \label{C:invar}
Let $\lambda$ be a $p$-regular partition with $p$-weight $2$.
\begin{enumerate}
\item If $\e\lambda =0$, then $D^{\lambda}$ induces semi-simply to a Rouquier block.
\item If $\e\lambda = 1$, then $D^{\lambda}$ restricts semi-simply to the principal block of $\F\sym{2p}$.
\item If $\lambda = [a,a+1]$, then $\e\lambda =1$, and $D^{\lambda}$ also induces semi-simply to a Rouquier block.
\end{enumerate}
In particular, $D^{\lambda}$ either induces semi-simply to a Rouquier block or restricts semi-simply to the principal block of $\F\sym{2p}$.
\end{cor}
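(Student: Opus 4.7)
The plan is to handle (i), (ii), and (iii) in sequence, with (i) and (ii) following a common template and (iii) bootstrapping from (i); the ``In particular'' assertion is then an immediate dichotomy. The hardest step will be the bootstrap in (iii), where I need a \emph{global} statement $\e{[a, a+1]} = 1$ in every weight-$2$ block, which I plan to obtain by a contradiction argument from (i).

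For (i), I would begin with $\lambda$ having $\e{\lambda} = 0$ in a block $B$ and choose a sequence of $[2:k_i]$-pairs $(B_i, B_{i-1})$ with $B_0 = B$ and $B_s$ a Rouquier block; such a sequence exists by standard runner-swap arguments on the $p$-core abacus (varying the number of beads $s$ between steps as needed). At each step, Lemma \ref{L:invar}(i) preserves the $[a,b]$-label, while Lemma \ref{L:invar}(ii) says the unique exceptional on the ``$C$-side'' $B_{i-1}$---present only at a $[2:1]$-pair---has $\e = 1$. So any image of $\lambda$ with $\e = 0$ is automatically non-exceptional, the induction through that step is semi-simple, and the $\e = 0$ condition propagates under $\Psi$ to $B_i$.

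The argument for (ii) is entirely symmetric: the exceptional on the ``$B$-side'' of a $[2:1]$-pair has $\e = 0$ by Lemma \ref{L:invar}(ii), so any image of $\lambda$ with $\e = 1$ is non-exceptional, the restriction through it is semi-simple, and $\e = 1$ persists. One picks a sequence of $[2:k_i]$-pairs restricting $B$ down to the principal block $B_0$ of $\F\sym{2p}$, obtained by suitable runner swaps that contract the $p$-core toward the empty partition.

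For (iii), I treat the two assertions separately. To obtain $\e{\lambda} = 1$ when $\lambda = [a, a+1]$, I argue by contradiction: if $\e{\lambda} = 0$, then (i) produces a semi-simple inducing sequence from $D^\lambda$ to a Rouquier block $B_R$, along which both the $[a, a+1]$-label (Lemma \ref{L:invar}(i)) and the value $\e = 0$ (by the argument for (i)) are preserved; the terminal module would be $D^{[a,a+1]_{B_R}}$ with $\e = 0$, contradicting $\e{[a, a+1]_{B_R}} = 1$ from Example \ref{E:eg}(i). Applied uniformly, this yields $\e{[a, a+1]} = 1$ in every weight-$2$ block. For the second assertion, I pick any inducing sequence from $B$ to a Rouquier block: at each $[2:1]$-pair the unique exceptional on the ``$B$-side'' has $\e = 0$ and so, by the universal statement just proved, carries a label $[a', b'] \neq [a, a+1]$; Lemma \ref{L:invar}(i) then says the exceptional on the ``$C$-side'' also carries the label $[a', b']$, so the image of $\lambda$ in the ``$C$-side''---which has label $[a, a+1]$---is never the exceptional, and the induction at each step is semi-simple.
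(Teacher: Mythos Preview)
Your proof is correct and, for parts (i) and (ii), matches the paper's argument essentially verbatim: fix a chain of $[2:k]$-pairs in the appropriate direction and use Lemma~\ref{L:invar}(ii) to see that the image of $\lambda$ is never the (unique, $k=1$) exceptional, since the exceptional carries the wrong $\varepsilon$-value; hence $\varepsilon$ propagates and each step is semi-simple.

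For part (iii) your argument is also correct but more roundabout than the paper's. You split the claim into two pieces: first prove $\e{\lambda}=1$ by contradiction via (i), then deduce the global fact $\e{[a,a+1]}=1$ in every weight-$2$ block, and finally use this label-theoretic obstruction to show the image is never exceptional along the inducing chain. The paper instead argues in one pass from the Rouquier end: by Lemma~\ref{L:invar}(i) the image in $C_r$ is $[a,a+1]_{C_r}$, which has $\varepsilon=1$ by Example~\ref{E:eg}(i); since the $B$-side exceptional always has $\varepsilon=0$, the image in $C_r$ is non-exceptional, whence $\varepsilon$ does not change on passing to $C_{r-1}$, and iterating down to $C_0=B$ yields simultaneously $\e{\lambda}=1$ and non-exceptionality at every step. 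This avoids the contradiction argument and the detour through label-matching on both sides of each pair. (Incidentally, in your final paragraph you check non-exceptionality on the $C$-side while the definition of ``induces semi-simply'' asks for it on the $B$-side; this is harmless since $\Phi$ preserves exceptionality, but it would be cleaner to argue directly on the $B$-side.)
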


\begin{proof}
Suppose that $\lambda \in \mathcal{P}_B$.  Let $C_0,\dotsc, C_r$ be a sequence of defect two blocks such that $C_0 = B$, $C_r$ is a Rouquier block, and for each $1 \leq i \leq r$, $(C_i,C_{i-1})$ is a $[2:k_i]$-pair for some $k_i \in \mathbb{Z}^+$ (for the existence of this sequence, see \cite[Lemma 3.1]{F}).  Also, let $B_0,\dotsc, B_s$ be a sequence of defect two blocks such that $B_0$ is the principal block of $\F\sym{2p}$, $B_s = B$, and for each $1 \leq j \leq s$, $(B_j,B_{j-1})$ is a $[2:l_j]$-pair for some $l_j \in \mathbb{Z}^+$.

If $\e\lambda = 0$, then $D^{\lambda}$ induces semi-simply to $C_r$ through the sequence $C_0,\dotsc, C_r$ by Lemma \ref{L:invar}(ii).  Similarly, if  $\e\lambda = 1$, then $D^{\lambda}$ restricts semi-simply to $B_0$ through the sequence $B_s,\dotsc,B_0$ by Lemma \ref{L:invar}(ii).

If $\lambda = [a,a+1]_B$, then $(\Psi_{C_r,C_{r-1}} \circ \dotsb \circ \Psi_{C_1,C_0})(\lambda) = [a,a+1]_{C_r}$ by Lemma \ref{L:invar}(i), and $\e{[a,a+1]_{C_r}} = 1$ by Example \ref{E:eg}(i).  Thus by Lemma \ref{L:invar}(ii), $\e\lambda = 1$ and $D^{\lambda}$ induces semi-simply to $C_r$ through the sequence $C_0,\dotsc, C_r$.
\end{proof}

\begin{cor} \label{C:wt2}
Let $\lambda$ be a $p$-regular partition having $p$-weight $2$.  Then $D^{\lambda}$ has complexity $2$ if $\e\lambda = 0$ or $\lambda = [a,a+1]$.
\end{cor}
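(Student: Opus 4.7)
The proof is essentially a direct assembly of results already in place. The plan is to reduce both cases of the hypothesis to the situation handled by Corollary \ref{C:inducetoRouq}, and then simply invoke it.

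First, I would observe that since $p$ is odd we have $p \geq 3 > 2 = w$, so the hypothesis $w < p$ of Corollary \ref{C:inducetoRouq} is automatically satisfied for any partition $\lambda$ of $p$-weight $2$. Thus it suffices to show that, in both of the two cases in the statement, $D^\lambda$ induces semi-simply to a Rouquier block; then Corollary \ref{C:inducetoRouq} immediately yields $c_{\sym{n}}(D^\lambda) = 2$.

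For the case $\e\lambda = 0$, this is exactly Corollary \ref{C:invar}(i). For the case $\lambda = [a,a+1]$, this is exactly the second assertion of Corollary \ref{C:invar}(iii). (Note that the first assertion of Corollary \ref{C:invar}(iii) shows that $\e{[a,a+1]} = 1$, so the two listed cases are genuinely disjoint and the second is not subsumed by the first; without invoking the induction conclusion of (iii) we could not appeal to (i).)

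There is no main obstacle here: the work has been packaged into Corollaries \ref{C:inducetoRouq} and \ref{C:invar}, and the present statement is simply the conjunction of those. The only small point worth flagging, as above, is that the case $\lambda = [a,a+1]$ has $\e\lambda = 1$ and hence lies outside the reach of part (i) of Corollary \ref{C:invar}; it is precisely because part (iii) of that corollary asserts the existence of a semi-simple induction path to a Rouquier block (and not merely a semi-simple restriction path to $B_0(\sym{2p})$) that we can treat it uniformly with the $\e\lambda = 0$ case and conclude via Corollary \ref{C:inducetoRouq}.
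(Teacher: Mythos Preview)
Your proposal is correct and matches the paper's own proof, which simply says the result follows immediately from Corollaries~\ref{C:invar}(i,iii) and~\ref{C:inducetoRouq}. Your additional remarks (that $w=2<p$ is automatic since $p$ is odd, and that the case $\lambda=[a,a+1]$ genuinely requires part (iii) rather than (i)) are helpful clarifications but add nothing new to the argument.
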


\begin{proof}
This follows immediately from Corollaries \ref{C:invar}(i,iii) and \ref{C:inducetoRouq}.
\end{proof}

To obtain the complexity of $D^{\lambda}$ for $p$-regular $\lambda$ having $p$-weight $2$ which is not covered by Corollary \ref{C:wt2}, we look at the principal block of $\F\sym{2p}$, which from now on will be denoted as $B_0$.

We start with the following well-known lemma.

\begin{lem} \label{L:restrictS_p}
Let $\lambda \in \mathcal{P}_{B_0}$ be $p$-regular, and $\lambda \ne (p+1,1^{p-1})\ (= [p-1,1]_{B_0})$.  Then $D^{\lambda} \down{\sym{2p-1}}$ has a simple summand labelled by a partition with $p$-weight $1$.

In particular, $D^{\lambda} \down{\sym{p}}$ is not projective.
\end{lem}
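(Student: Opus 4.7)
The plan is to apply the modular branching rule (Theorem~\ref{T:Klesh}(iii) with $k = 1$) to the restriction from $\sym{2p}$ to $\sym{2p-1}$. For each residue $r$ there is a block $C_r$ of $\mathbb{F}\sym{2p-1}$ together with the bijection $\Phi$ of Section~\ref{SS:resind}; if $\lambda$ has exactly one $r$-normal bead then $D^{\lambda} \down{C_r}$ is the simple module $D^{\Phi(\lambda)}$, and is therefore a direct summand of $D^{\lambda} \down{\sym{2p-1}}$. Displaying $\lambda \in \mathcal{P}_{B_0}$ on an abacus with $s = 2p$ beads (so that each runner carries exactly two beads), a short bookkeeping shows that for $r \in \{1, \ldots, p-1\}$ an $r$-normal bead move changes the total displacement from $2$ to $1$, so $\Phi(\lambda)$ has $p$-weight $1$; whereas $r = 0$ corresponds to a wrap-around move and gives $\Phi(\lambda)$ of weight $0$. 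It therefore suffices to exhibit, for each $\lambda$ allowed by the hypothesis, a residue $r \neq 0$ for which $\lambda$ has exactly one $r$-normal bead.

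Using Example~\ref{E:eg}(ii), the $p$-regular partitions in $\mathcal{P}_{B_0}$ (after discarding the family $[a,0]_{B_0} = (a+1, 1^{2p-a-1})$, which is always $p$-singular) fall into two groups: the hooks $[p-1,a+1]_{B_0} = (p+a+1, 1^{p-a-1})$ for $0 \leq a \leq p-1$, whose abacus has the lower bead on runner $a$ slid down to position $3p+a$; and the double-shifts $[a-1,b+1]_{B_0}$ for $0 \leq b < a < p$ with $(a,b) \neq (1,0)$, whose abacus has the lower beads on runners $a$ and $b$ slid down to $2p+a$ and $2p+b$ respectively. The excluded partition is exactly the hook with $a = 0$. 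For a hook with $a \geq 1$ the bead at $3p+a$ is the only $a$-removable bead, and the unique $a$-addable bead (at $p+a-1$ on runner $a-1$) lies below it, so it is $a$-normal with residue $a \neq 0$. For a double-shift with $b \neq a-1$ the bead at $2p+a$ is the only $a$-removable bead and the same argument applies. In the remaining sub-case $b = a-1$ (which forces $a \geq 2$), the bead at $2p+a$ is blocked by the neighbour at $2p+a-1$; but the bead at $2p+a-1$ is the unique $(a-1)$-removable bead and is $(a-1)$-normal, with residue $a-1 \geq 1$. In every case $D^{\lambda}\down{C_r}$ is a simple direct summand of $D^{\lambda}\down{\sym{2p-1}}$ labelled by a weight-$1$ partition.

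For the final assertion, let $\mu = \Phi(\lambda)$ be the weight-$1$ partition just constructed, so $c_{\sym{2p-1}}(D^{\mu}) = 1$ by Theorem~\ref{T:Rouq}. Since $p$ is odd and $2p-1 < p^2$, the Sylow $p$-subgroups of $\sym{2p-1}$ are cyclic of order $p$, and we may choose one such subgroup $C_p$ inside the natural $\sym{p} \leq \sym{2p-1}$. Theorem~\ref{T:complexity}(iii) then forces $D^{\mu}$ to be non-free as an $\mathbb{F} C_p$-module, so $D^{\mu} \down{\sym{p}}$ is not projective (an $\mathbb{F}\sym{p}$-module is projective exactly when its restriction to a Sylow subgroup is free). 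Being a direct summand of $D^{\lambda} \down{\sym{p}}$, the latter is not projective either. I expect the main obstacle to be the sub-case $b = a-1$, in which the naturally indicated residue $a$ is unavailable and one has to pass to the neighbouring bead of residue $a-1$.
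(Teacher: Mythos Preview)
Your argument is correct and follows exactly the route the paper intends: the paper's proof is the single line ``This follows from Theorem~\ref{T:Klesh}(iii)'', and what you have done is unpack that citation by carrying out the abacus combinatorics explicitly. Your observation that, with $s=2p$ beads, removing an $r$-node with $r\neq 0$ always lands in a block of $\sym{2p-1}$ of $p$-weight~$1$ (while $r=0$ gives weight~$0$) is a clean way to organise the verification, and your case split (hooks versus double-shifts, with the sub-case $b=a-1$ handled by passing to residue $a-1$) is complete. One small wording quibble: in the hook case the unique $a$-addable bead sits at position $p+a-1$, which is a \emph{smaller} abacus position than the removable bead at $3p+a$; the normality condition in the paper looks only at positions $y>t$, so the addable bead is simply irrelevant---your conclusion that the bead at $3p+a$ is $a$-normal is right, but the justification ``lies below it'' is not quite the reason. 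The deduction of the ``in particular'' clause via Theorem~\ref{T:Rouq} (noting that every weight-$1$ block is trivially Rouquier) and Theorem~\ref{T:complexity}(iii) is also fine.
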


\begin{proof}
This follows from Theorem \ref{T:Klesh}(iii).
\end{proof}

Let $C_p$ denote the subgroup of $\sym{p}$ generated by the distinguished $p$-cycle $(1,2,\dotsc,p)$.  Also, write $\Delta C_p$ for the subgroup of $\sym{2p}$ generated  by $(1,2,\dotsc, p)(p+1,p+2,\dotsc, 2p)$. Our main result of this section relies heavily on the following proposition.

\begin{prop} \label{P:messy}
Let $\lambda \in \mathcal{P}_{B_0}$ be $p$-regular, $\lambda \ne (p+1,1^{p-1})$, and $\lambda \ne [a,a+1]_{B_0}$ for all $a$.  Then $D^{\lambda} \down{\Delta C_p}$ is not free.
\end{prop}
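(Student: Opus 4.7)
The plan is first to translate the statement into a rank-variety condition. Let $g_1 = (1, 2, \dotsc, p)$ and $g_2 = (p+1, p+2, \dotsc, 2p)$, and set $E = \langle g_1, g_2\rangle \cong C_p \times C_p$, so that $\Delta C_p = \langle g_1 g_2 \rangle \le E$. A direct computation yields $g_1 g_2 - u_{(1,1)} = (g_1 - 1)(g_2 - 1) \in (\rad \F E)^2$ while $g_1 g_2 - 1 \notin (\rad \F E)^2$; hence by Theorem~\ref{T:coh}, $D^{\lambda} \down{\Delta C_p}$ is free if and only if $(1, 1) \notin V^{\#}_E(D^{\lambda})$. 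The goal is therefore to exhibit the point $(1, 1)$ inside $V^{\#}_E(D^{\lambda})$.

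Next, I plan to analyse $D^{\lambda} \down{\sym{p} \times \sym{p}}$ block by block, then restrict through $\Delta\sym{p} \le \sym{p} \times \sym{p}$ to $\Delta C_p$; under the diagonal embedding, an external tensor product $M \boxtimes N$ becomes the internal tensor product $M \otimes_{\F} N$ with diagonal action. If a summand of $D^{\lambda} \down{\sym{p} \times \sym{p}}$ lies in a block $b \otimes b'$ of $\F(\sym{p} \times \sym{p})$ with $b$ of defect zero, then its left tensor factor is $\F\sym{p}$-projective, hence restricts to a free $\F C_p$-module; since $T_p \otimes T_n$ is $\F C_p$-free, the $\Delta C_p$-restriction of that summand is free. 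So the non-freeness must come from the summand lying in $b_0(\sym{p}) \otimes b_0(\sym{p})$, where $b_0(\sym{p})$ denotes the principal block.

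Every composition factor of this principal-block summand has the shape $D^{(p-i,1^i)} \boxtimes D^{(p-j,1^j)}$ with $0 \le i, j \le p - 2$. Its $\Delta C_p$-restriction is $D^{(p-i,1^i)} \down{C_p} \otimes_\F D^{(p-j,1^j)} \down{C_p}$, and each tensor factor, having parent in a block of positive defect, has an indecomposable summand of the form $T_m$ with $m < p$. The classical Jordan-block calculation yields that $T_m \otimes T_n$ for $m, n < p$ has dimension $mn$ coprime to $p$, hence is never $\F C_p$-free. Thus each composition factor of the principal-block summand contributes a non-free $\F\Delta C_p$-summand upon restriction.

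The main obstacle is to promote this composition-factor statement to $D^{\lambda} \down{\Delta C_p}$ itself, since extensions of non-free $\F C_p$-modules can be free (for instance, the non-split extension of $T_1$ by $T_{p-1}$ is the free module $T_p$). Here the hypothesis on $\lambda$ becomes essential. Using Mikaelian's explicit description of the simple modules in $B_0(\sym{2p})$ \cite{Mikaelian}, I would for the stated $\lambda$ either exhibit a concrete $\Delta C_p$-fixed vector of $D^{\lambda}$ outside $(g_1 g_2 - 1)^{p-1} D^{\lambda}$ (producing a Jordan block of size less than $p$), or establish the dimension inequality $\dim(D^{\lambda})^{\Delta C_p} > \dim D^{\lambda}/p$. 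The excluded partitions correspond precisely to where this strategy fails or is unnecessary: $\lambda = (p+1, 1^{p-1})$ has complexity $1$ by Proposition~\ref{P:comp<}, and $\lambda = [a, a+1]_{B_0}$ is already handled by Corollary~\ref{C:wt2}.
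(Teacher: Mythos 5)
Your setup is sound and in fact parallels the paper's framework: the reduction of non-freeness over $\Delta C_p$ to the summand of $D^{\lambda}\down{\sym{p}\times\sym{p}}$ lying in the principal block $b_0$ is correct (summands in blocks with a defect-zero tensor factor are of the form $S \boxtimes N$ with $S$ projective, hence restrict freely), your observation that each composition factor $D_i \boxtimes D_j$ of the $b_0$-part restricts non-freely is right, and you correctly identify the real obstacle: non-free $\F C_p$-modules can glue into free ones, so the composition-factor statement proves nothing about $D^{\lambda}$ itself. But at exactly that point the proposal stops being a proof. The sentence ``I would \dots{} either exhibit a concrete $\Delta C_p$-fixed vector \dots{} or establish the dimension inequality $\dim(D^{\lambda})^{\Delta C_p} > \dim D^{\lambda}/p$'' is a statement of intent, not an argument; it is precisely this step that constitutes the entire content of the proposition. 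The paper resolves it by combining Mikaelian's Theorem 10 (the Loewy structure of $D^{[a,b]}\down{b_0}$: head and socle $\nabla(a,b)$, heart $\nabla(a+1,b)\oplus\nabla(a,b-1)$) with an explicit computation of $\Omega^0(\nabla(i,j)\down{\Delta C_p})$ as a sum of Jordan blocks (Proposition \ref{P:Deltarestrict}) and two counting lemmas (Lemmas \ref{L:lem1} and \ref{L:lem2}), showing that the number of indecomposable summands of the restricted heart --- a subquotient of the relevant summand $Q$, hence bounded by $\dim(Q)/p$ if $Q$ were free --- in fact exceeds $\dim(Q)/p$. This is a quantitative comparison that genuinely has to be carried out; nothing in your outline guarantees that either of your two proposed criteria actually holds, and verifying one of them amounts to redoing the paper's computations.

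A second, smaller gap: Mikaelian's restriction theorem only applies when $2 \leq b \leq a \leq p-2$ and $a+b \leq p$, so any proof along these lines must treat separately (i) the hook-type labels $[p-1,b]$ and their sign-twists $[p-b,1]$, and (ii) the range $a+b>p$. The paper handles (i) by noting $\dim D^{(p+b,1^{p-b})} = \binom{2p-2}{p-b}$ is coprime to $p$, and (ii) by the isomorphism $D^{[a,b]}\otimes_{\F}\sgn \cong D^{[p-a,p-b]}$ together with the fact that $\Delta C_p$ consists of even permutations. Your proposal does not anticipate this case division, so even granting the main strategy, the argument as outlined would not cover all partitions in the statement.
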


The proof of Proposition \ref{P:messy} involves some messy computations building on the work of Mikaelian \cite{Mikaelian}, and shall be deferred to the end of this section.

\begin{cor} \label{C:B_0}
Let $\lambda \in \mathcal{P}_{B_0}$ be $p$-regular.  Then
\[
c_{\sym{2p}}(D^{\lambda}) =
\begin{cases}
1, &\text{if } \lambda = (p+1,1^{p-1}); \\
2, &\text{otherwise}.
\end{cases}
\]
\end{cor}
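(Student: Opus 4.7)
The plan is to split on whether $\lambda = (p+1, 1^{p-1})$ and, for $\lambda \neq (p+1,1^{p-1})$, to isolate the single subcase not already covered by Corollary \ref{C:wt2}.

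First, for $\lambda = (p+1,1^{p-1})$, Proposition \ref{P:comp<} applied with $w=2$ gives $c_{\sym{2p}}(D^\lambda) < 2$, while the positive defect of $B_0$ forces $D^\lambda$ to be non-projective, so Theorem \ref{T:complexity}(ii) yields $c_{\sym{2p}}(D^\lambda) \geq 1$; the two bounds combine to give equality at $1$. For $\lambda \neq (p+1,1^{p-1})$, the upper bound $c_{\sym{2p}}(D^\lambda) \leq 2$ is immediate from Theorem \ref{T:complexity}(iii), since the Sylow $p$-subgroup $E$ of $\sym{2p}$ is elementary Abelian of rank $2$. If additionally $\e{\lambda}=0$ or $\lambda=[a,a+1]_{B_0}$, Corollary \ref{C:wt2} already delivers complexity $2$, leaving the core case $\e{\lambda}=1$ with $\lambda \neq [a,a+1]_{B_0}$.

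Here I would argue by contradiction: suppose $c_{\sym{2p}}(D^\lambda) = 1$. Then by Theorem \ref{T:complexity}(i), $\dim V_{\sym{2p}}(D^\lambda) = 1$, so $\overline{V_{\sym{2p}}(D^\lambda)}$ is a finite set, and since $D^\lambda$ is simple, Theorem \ref{T:cohomological variety}(ii) forces it to consist of a single point. Fix $E=\langle g_1,g_2 \rangle$ with $g_1=(1,\dotsc,p)$ and $g_2=(p+1,\dotsc,2p)$; as $E$ is the unique maximal elementary Abelian $p$-subgroup of $\sym{2p}$ up to conjugacy, Theorem \ref{T:cohomological variety}(i) realises $V_{\sym{2p}}(D^\lambda) = \iota(V_E(D^\lambda))$, where $\iota$ collapses orbits under $W = N_{\sym{2p}}(E)/C_{\sym{2p}}(E)$. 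To obtain a contradiction I will exhibit two distinct $W$-orbits of lines in $V^{\#}_E(D^\lambda)$, so that the quotient contains at least two distinct projective points. For the first: Lemma \ref{L:restrictS_p} gives $D^\lambda \down{\sym{p}}$ non-projective, hence not free upon further restriction to $\langle g_1\rangle$, so $(1,0) \in V^{\#}_E(D^\lambda)$. For the second: Proposition \ref{P:messy} gives $D^\lambda \down{\Delta C_p}$ not free, and since $g_1 g_2 - u_{(1,1)} = (g_1-1)(g_2-1) \in (\rad(\F E))^2$, Theorem \ref{T:coh} upgrades this to $(1,1) \in V^{\#}_E(D^\lambda)$.

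The key technical step, and the one I expect to be the main obstacle, is verifying that $(1,0)$ and $(1,1)$ lie in different $W$-orbits, since this is precisely what prevents the quotient from collapsing to a single projective point. A short computation gives $C_{\sym{2p}}(E) = E$ (the $g_i$ being $p$-cycles of disjoint support) and $W \cong (\F_p^\times)^2 \rtimes S_2$, with the $(\F_p^\times)^2$-factor arising from the normalisers $N_{\sym{p}}(\langle g_i\rangle)$ in each copy of $\sym{p}$ and the $S_2$-factor from the swap $\sigma=(1,p+1)(2,p+2)\dotsb(p,2p)$. The projective $W$-orbit of $[1:0]$ is then $\{[1:0],[0:1]\}$ while that of $[1:1]$ is $\{[1:c] : c \in \F_p^\times\}$; these are disjoint, so $\overline{V_{\sym{2p}}(D^\lambda)}$ contains at least two distinct points, contradicting connectedness and forcing $c_{\sym{2p}}(D^\lambda) = 2$.
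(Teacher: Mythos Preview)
Your proof is correct and follows the same strategy as the paper: dispatch $(p+1,1^{p-1})$ via Proposition~\ref{P:comp<}, the $[a,a+1]_{B_0}$ cases via Corollary~\ref{C:wt2}, and for the remaining $\lambda$ use Lemma~\ref{L:restrictS_p} and Proposition~\ref{P:messy} (through Theorem~\ref{T:coh}) to place $(1,0)$ and $(1,1)$ in $V_E^{\#}(D^\lambda)$, then appeal to connectedness (Theorem~\ref{T:cohomological variety}(ii)) to force dimension~$2$. The only place you work harder than the paper is in separating the images of $(1,0)$ and $(1,1)$ in $V_{\sym{2p}}(D^\lambda)$: rather than computing $W=N_{\sym{2p}}(E)/C_{\sym{2p}}(E)$ and its orbits explicitly, the paper simply observes that $g_1$ and $g_1g_2$ have different cycle types and hence are not conjugate in $\sym{2p}$, which immediately yields two distinct lines in $V_{\sym{2p}}(D^\lambda)$.
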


\begin{proof}
Firstly, $c_{\sym{2p}}(D^{(p+1,1^{p-1})}) <2$ by Proposition \ref{P:comp<}.  Since $D^{(p+1,1^{p-1})}$ is not projective, by Theorem \ref{T:complexity}(ii), we must have $c_{\sym{2p}}(D^{(p+1,1^{p-1})}) = 1$.

If $\lambda = [a,a+1]$ for some $1 \leq a < p$, then Corollary \ref{C:wt2} applies.

For the remaining cases, both $D^{\lambda} \down{C_p}$ and $D^{\lambda} \down{\Delta C_p}$ are not free by Lemma \ref{L:restrictS_p} and Proposition \ref{P:messy}, and hence $(1,0),(1,1) \in V^{\#}_E(D^{\lambda})$ by Theorem \ref{T:coh}, where $E$ is the Sylow $p$-subgroup of $\sym{2p}$ generated by $g_1 = (1,2,\dotsc, p)$ and $g_2 = (p+1,p+2,\dotsc, 2p)$.  Since $g_1$ and $g_1 g_2$ are not conjugates in $\sym{2p}$, we see that $V_{\sym{2p}}(D^{\lambda})$ contains at least two distinct lines.  Consequently, this variety must have Krull dimension at least $2$ by Theorem \ref{T:cohomological variety}(ii), and thus equal to $2$.
\end{proof}

\begin{thm} \label{T:wt2}
Let $\lambda$ be a $p$-regular partition of $n$ with $p$-weight $2$.  Then
\[
c_{\sym{n}}(D^{\lambda}) =
\begin{cases}
1, &\text{if } \lambda = (p+1,1^{p-1}); \\
2, &\text{otherwise.}
\end{cases}
\]
\end{thm}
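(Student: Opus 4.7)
My plan is to leverage the dichotomy supplied by Corollary \ref{C:invar} together with the base-case complexity calculations of Corollary \ref{C:B_0} (for $B_0 = B_0(\sym{2p})$) and Corollary \ref{C:inducetoRouq} (for Rouquier blocks), reducing the theorem to a uniqueness statement for the label $[p-1,1]$ with $\e\lambda = 1$.

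The case $\lambda = (p+1,1^{p-1})$ is immediate: this partition has empty $p$-core and weight~$2$, hence lies in $\mathcal{P}_{B_0}$, and Corollary \ref{C:B_0} gives complexity~$1$. Assume henceforth that $\lambda \neq (p+1,1^{p-1})$; the goal becomes $c_{\sym{n}}(D^\lambda) = 2$. By Corollary \ref{C:invar}, either $D^\lambda$ induces semi-simply to a Rouquier block (which is automatic when $\e\lambda = 0$ or $\lambda = [a,a+1]$), in which case Corollary \ref{C:wt2} (equivalently Corollary \ref{C:inducetoRouq}) closes the argument; or $D^\lambda$ restricts semi-simply to $D^\mu$ for some $\mu \in \mathcal{P}_{B_0}$. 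In the latter case, iterating Corollary \ref{C:same} along the $[2:k]$-chain yields $c_{\sym{n}}(D^\lambda) = c_{\sym{2p}}(D^\mu)$; Lemma \ref{L:invar}(i) identifies $\mu = [a,b]_{B_0}$ with the same label as $\lambda = [a,b]_B$, and Corollary \ref{C:B_0} then delivers complexity $2$ as long as $\mu \neq (p+1,1^{p-1}) = [p-1,1]_{B_0}$.

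The crux, and the step I expect to be the main obstacle, is ruling out $\lambda = [p-1,1]_B$ with $\e\lambda = 1$ and $\lambda \neq (p+1,1^{p-1})$. I would verify no such $\lambda$ exists by inspecting the three branches of the labelling definition. Case~(A) forces $\e\lambda = 0$ by definition and is excluded. Case~(C) with $\e\lambda = 1$ would require a bead $y$ strictly between $x-p$ and $x$, but $a = p-1$ forces all $p-1$ positions strictly between $x$ and $x-p$ to be vacant, leaving no room for such a $y$. Only case~(B) remains, giving a bead at $x$ with $x-p$ and $x-2p$ both vacant, all $p-1$ positions strictly between $x$ and $x-p$ vacant, and all $p-1$ positions strictly between $x-p$ and $x-2p$ occupied. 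A bead-count on the runner containing $x$, combined with $\lambda$ having $p$-weight $2$, forces the single bead at $x$ to account for the entire slide-up of $2$, so every other bead already sits at the topmost available position on its runner. Reading off the resulting forced bead distribution on the abacus shows that the $p$-core of $\lambda$ must be the empty partition, so $\lambda \in \mathcal{P}_{B_0}$; within $B_0$ the label $[p-1,1]$ uniquely picks out $(p+1,1^{p-1})$, contradicting the standing hypothesis and completing the proof.
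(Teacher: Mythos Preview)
Your proof is correct and follows the same global strategy as the paper: reduce via Corollary~\ref{C:invar} to either the Rouquier case (handled by Corollary~\ref{C:wt2}) or the $B_0$ case (handled by Corollary~\ref{C:B_0}), transport complexity along the $[2{:}k]$-chain via Corollary~\ref{C:same} and Lemma~\ref{L:invar}(i), and then isolate the residual case $\lambda = [p-1,1]$ with $\e\lambda = 1$.

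The difference lies entirely in how that residual case is dispatched. The paper argues through the block chain: the only block forming a $[2{:}k]$-pair with $B_0$ is the principal block $B_1$ of $\F\sym{2p+1}$, and $[p-1,1]_{B_1}$ labels the unique exceptional simple module for $(B_1,B_0)$; Lemma~\ref{L:invar}(ii) then forces $\e{[p-1,1]_{B_1}} = 0$, and this propagates up the chain to give $\e\lambda = 0$, a contradiction. You instead unpack the definition of $[p-1,1]$ with $\e\lambda = 1$ directly on the abacus, eliminate the first and third bullets of the labelling, and show that the second bullet forces a rigid bead configuration whose $p$-core is empty. Your route is more elementary and self-contained---it needs neither the identification of the exceptional module for $(B_1,B_0)$ nor Lemma~\ref{L:invar}(ii)---at the price of a short abacus computation (which you only sketch; it does go through: if the bead at $x$ sits in row $m$ of runner $r$, one finds $m$ beads on runners $0,\dots,r-1$ and $m-1$ on runners $r,\dots,p-1$, which is precisely the display of the empty core with $p(m-1)+r$ beads). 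The paper's route stays within the $\Phi/\Psi$ and $\varepsilon$-machinery already developed and is arguably slicker once that machinery is in place.
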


\begin{proof}
By Corollary \ref{C:wt2}, it suffices to consider the case where $\e\lambda = 1$ and $\lambda \ne [a,a+1]$ for all $a$.  In this case, $\lambda$ restricts semi-simply to $B_0$ by \ref{C:invar}(ii), so that, unless $\lambda = [p-1,1]$, we have $c_{\sym{n}}(D^{\lambda}) = 2$ by Lemma \ref{L:invar}(i) and Corollary \ref{C:same}.

It remains to show that if $\e\lambda = 1$ and $\lambda = [p-1,1]$, then $\lambda = (p+1,1^{p-1})$.  Let $B_s, \dotsc, B_1, B_0$ be the blocks through which $D^{\lambda}$ restricts semi-simply to $B_0$.  Since $(B_1,B_0)$ is a $[2:k]$-pair for some $k$, we see that $B_1$ must be the principal block of $\F\sym{2p+1}$, as this is the only block forming a $[2:k]$-pair for some $k$ with $B_0$.  Now, $D^{(p+1,1^{p-1})}$ is the unique exceptional simple module for $(B_1,B_0)$.  Thus, unless $s=0$, we would have $\e\lambda= 0$ by Lemma \ref{L:invar}(ii), a contradiction.  Hence $s=0$, and $\lambda = [p-1,1]_{B_0} = (p+1,1^{p-1})$.
\end{proof}

The remainder of this section shall be devoted to the proof of Proposition \ref{P:messy}.

For $1 \leq i < p$, let $D_i = D^{(i+1,1^{p-i-1})}$.  These are the simple modules lying in the principal block of $\F\sym{p}$.  Also, $D_i \down{C_p}$ has a unique non-projective summand, which is the Jordan block $J_i$ of size $i$ if $i$ is odd, and the Jordan block $J_{p-i}$ of size $p-i$ if $i$ is even (see \cite[\S 3]{Tsushima}).

Denote the principal block of $\F(\sym{p} \times \sym{p})$ by $b_0$.  The simple modules lying in $b_0$ are $D_i \boxtimes_\F D_j$ ($1 \leq i, j < p$).  For $1 \leq j \leq i < p$, let
\[\nabla(i,j) = \bigoplus_{k= j}^{i} D_k \boxtimes_\F D_{i+j-k}.\]

Identifying $\sym{p} \times \sym{p}$ as the subgroup $\prod_{i=1}^2 \sym{p}[i]$ of $\sym{2p}$, where $\sym{p}[i] = \{ \sigma[i] \mid \sigma \in \sym{p}\}$ and
$$(\sigma[i]) (j) =
\begin{cases}
\sigma(j-(i-1)p) + (i-1)p, &\text{if } (i-1)p < j \leq ip, \\
j, &\text{otherwise;}
\end{cases}
$$
we have the following theorem:

\begin{thm}[{\cite[Theorem 10]{Mikaelian}}] \label{T:Mik}
Let $a,b \in \mathbb{Z}$ such that $2 \leq b \leq a \leq p-2$ and $a+b \leq p$.  Then $D^{[a,b]_{B_0}} \down{b_0}$ is stable, and has Loewy length $3$, with head and socle both isomorphic to $\nabla(a,b)$ and a semi-simple heart isomorphic to $\nabla(a+1,b) \bigoplus \nabla(a,b-1)$.
\end{thm}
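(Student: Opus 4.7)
The plan is to identify $\lambda = [a,b]_{B_0}$ explicitly via Example \ref{E:eg}(ii), and then to analyse $D^{\lambda}\down{b_0}$ by determining its head via Frobenius reciprocity, its socle via self-duality, and its heart via a composition-multiplicity count backed up by an $\Ext^1$-vanishing argument. Under the hypotheses $2 \leq b \leq a \leq p-2$ and $a+b \leq p$, the partition $\lambda$ is obtained from the $p$-core of $B_0$ by sliding one bead down runner $a+1$ and one bead down runner $b-1$; these runners are distinct, $\lambda$ is $p$-regular, and $\lambda$ falls outside the small exceptional range already covered by Corollary \ref{C:wt2}.

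For the head, I would invoke Frobenius reciprocity: for each pair $1 \leq i,j < p$,
\[
\operatorname{Hom}_{b_0}(D^{\lambda}\down{b_0}, D_i \boxtimes_\F D_j) \cong \operatorname{Hom}_{\sym{2p}}(D^{\lambda}, (D_i \boxtimes_\F D_j)\up{\sym{2p}}).
\]
The right-hand induced module can be built up in stages through the tower $\sym{p} \times \sym{p} \subset \sym{p+1} \times \sym{p} \subset \dotsb \subset \sym{2p}$, picking off socles via repeated applications of Theorem \ref{T:Klesh}(iv); tracking which residues occur as conormal beads along the way, and matching against the abacus description of $\lambda$, should single out exactly the summands $D_k \boxtimes_\F D_{a+b-k}$ for $b \leq k \leq a$, giving head $\nabla(a,b)$. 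Self-duality of $D^{\lambda}$ together with its compatibility with restriction then yields $\soc(D^{\lambda}\down{b_0}) \cong \nabla(a,b)$ as well. Stability follows from the fact that $\sym{p} \times \sym{p}$ is normal in $\sym{p} \wr \sym{2} \subseteq \sym{2p}$, so the isomorphism class of any restriction of a $\sym{2p}$-module to $\sym{p} \times \sym{p}$ is automatically invariant under the swap.

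For the heart, I would pass through the Specht module: $S^{\lambda}\down{\sym{p} \times \sym{p}}$ carries a Specht filtration by the branching rule, and combining this with the decomposition matrix of $B_0$ (known explicitly in weight $2$ by Richards) and the composition factors of $S^\mu$ for $\mu \in \mathcal{P}_p$ (Peel--Tsushima) lets one read off the total composition multiplicities of $D^{\lambda}\down{b_0}$. These should match $\nabla(a,b)^{\oplus 2} \oplus \nabla(a+1,b) \oplus \nabla(a,b-1)$. Given the identification of head and socle with $\nabla(a,b)$, this forces the heart to have $\nabla(a+1,b) \oplus \nabla(a,b-1)$ as its composition series.

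The main obstacle is to promote this numerical identification of the heart to an honest direct sum decomposition and to rule out a fourth Loewy layer, both of which reduce to the vanishing of $\Ext^1_{b_0}(D_k \boxtimes_\F D_l, D_{k'} \boxtimes_\F D_{l'})$ for the pairs that arise. Here the hypothesis $a+b \leq p$ is essential: it confines the composition factors of $D^{\lambda}\down{b_0}$ to a range where the Ext-quiver of the principal block of $\F\sym{p}$ (known since Tsushima) is tame enough to force --- via K\"unneth --- the required vanishings. Carrying this through rigorously, and ruling out indecomposable extensions that are consistent with stability but would violate the Loewy-length bound, is the technically delicate step I expect to dominate the proof.
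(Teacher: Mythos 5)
You should first be aware that the paper does not prove this statement at all: Theorem \ref{T:Mik} is imported wholesale from \cite[Theorem 10]{Mikaelian}, and the paper's only contribution is the Remark following it, which translates Mikaelian's notation ($(p-i-1,p-j-1)$ and $(a-b+1|2p-a-b-1)$) into the $D_i\boxtimes_\F D_j$ and $[a,b]$ notation used here. So there is no ``paper approach'' to compare yours with; your proposal must be judged as a free-standing reconstruction of Mikaelian's theorem, and as such it has a fatal gap.

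The gap is in the head (and hence socle) computation, which is the core of the theorem. The Frobenius reciprocity reduction is fine, but your proposed evaluation of $\operatorname{Hom}_{\sym{2p}}(D^{\lambda},(D_i\boxtimes_\F D_j)\up{\sym{2p}})$ by inducing ``in stages through the tower $\sym{p}\times\sym{p}\subset\sym{p+1}\times\sym{p}\subset\dotsb\subset\sym{2p}$'' cannot be carried out: $\sym{p+1}\times\sym{p}$ is not a subgroup of $\sym{2p}$ (it would require $2p+1$ letters), and in fact there is no Young subgroup strictly between $\sym{p}\times\sym{p}$ and $\sym{2p}$ at all. Moreover, Theorem \ref{T:Klesh} governs only the one-sided, blockwise inclusions $\sym{n-k}\subset\sym{n}$, and even there it computes the socle of the induction/restriction of a \emph{simple} module, so it could not be iterated along a tower in any case (after one step the module is no longer simple). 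There is no elementary combinatorial socle/head rule for the two-sided restriction $\sym{2p}\downarrow\sym{p}\times\sym{p}$; producing one in weight $2$ is precisely the content of Mikaelian's paper, which is why it is a substantial standalone work rather than an exercise with Theorem \ref{T:Klesh}. Since your socle claim is obtained from the head by duality, and your Loewy-length and heart claims are derived from head, socle and a multiplicity count, everything downstream collapses with this step. For the record, two ingredients of your plan are sound: the Grothendieck-group count (inverse weight-$2$ decomposition matrix, Littlewood--Richardson filtration of $S^{\lambda}\down{\sym{p}\times\sym{p}}$, weight-$1$ decomposition numbers of $\F\sym{p}$), and the K\"unneth/Brauer-tree argument showing $\Ext^1_{b_0}$ vanishes among the claimed heart factors, which would yield semisimplicity of the heart once head and socle were known. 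But note also that your reading of ``stable'' as invariance under the factor swap makes that clause trivially true for \emph{every} $\F\sym{2p}$-module, since $\sym{p}\wr\sym{2}$ normalizes $\sym{p}\times\sym{p}$; a clause that is automatic would not be part of the theorem, so whatever Mikaelian means by ``stable'' (his definition should be checked) is likewise not established by your argument.
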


\begin{rem}
To obtain Theorem \ref{T:Mik} from Theorem 10 of \cite{Mikaelian}, please note the following relations between our notations and those in \cite{Mikaelian}:
\begin{align*}
D_i \boxtimes_{\F} D_j &= (p-i-1,p-j-1); \\
[a,b] &= (a-b+1|2p-a-b-1).
\end{align*}
\end{rem}

For any finite dimensional $\F G$-module $M$, we write $\Omega^0(M)$ for the projective-free part of $M$.  Thus, $\Omega^0(M)$ has no non-zero projective direct summand, and $M \cong \Omega^0(M) \oplus P$ for some projective $\F G$-module $P$.  Clearly, $\Omega^0(M)$ is well-defined up to isomorphism by Krull-Schmidt Theorem.

\begin{prop} \label{P:Deltarestrict}
For $1 \leq j \leq i < p$,
\[
\Omega^0(\nabla(i,j) \down{\Delta C_p}) \cong
\begin{cases}
\bigoplus_{k=1}^{\min\{\frac{i+j}{2},p-\frac{i+j}{2}\}} \min\{2k-1,i-j+1\}J_{2k-1}, &\text{if } 2 \mid (i+j); \\
\bigoplus_{k=1}^{\min\{\frac{i+j-1}{2},p-\frac{i+j+1}{2}\}} \min\{2k,i-j+1\} J_{p-2k}, &\text{otherwise.}
\end{cases}\]
\end{prop}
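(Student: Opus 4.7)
The plan is to reduce this to sums of tensor products of Jordan-block $\F C_p$-modules and then count multiplicities by a symmetry argument. Since $D_k \down{C_p} \cong J_{m_k} \oplus P_k$ with $P_k$ projective, and the $\F$-tensor product of any $\F C_p$-module with a projective one is again projective, one obtains
\[
\Omega^0\bigl(\nabla(i,j) \down{\Delta C_p}\bigr) \cong \Omega^0\Bigl(\bigoplus_{k=j}^{i} J_{m_k} \otimes_\F J_{m_{i+j-k}}\Bigr),
\]
where $m_k$ is always odd and lies in $\{1, 3, \ldots, p-2\}$.

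Next I would invoke the classical Clebsch--Gordan-type decomposition of tensor products of indecomposable $\F C_p$-modules: for $1 \leq a, b \leq p-1$,
\[
\Omega^0(J_a \otimes_\F J_b) \cong \bigoplus_{\substack{c \equiv a+b-1 \,(\mathrm{mod}\,2) \\ |a-b|+1 \leq c \leq \min(a+b-1,\,2p-a-b-1)}} J_c.
\]
This can be verified directly by computing the Jordan form of the unipotent operator $g \otimes g$ acting on $J_a \otimes_\F J_b$, or cited from the classical theory of the Green ring of $\F C_p$.

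The key technical step is to unify the four parity subcases. Setting $\ell = i+j$, a routine case-check (according to the parities of $k$ and of $\ell-k$) shows that $|m_k - m_{\ell-k}|$ equals $|2k-\ell|$ when $\ell$ is even and $|p-\ell|$ when $\ell$ is odd, while $\min(m_k + m_{\ell-k} - 1,\, 2p - m_k - m_{\ell-k} - 1)$ equals $M := \min(\ell-1,\,2p-\ell-1)$ when $\ell$ is even (independent of $k$) and $p-1-|2k-\ell|$ when $\ell$ is odd. In particular every $c$ appearing is odd and strictly less than $p$, matching the shape of the proposition's formula.

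Finally, for each admissible odd $c$ I would count the number of $k \in [j,i]$ whose summand contains $J_c$: the condition is $|2k-\ell| \leq c-1$ in the even-$\ell$ case and $|2k-\ell| \leq p-1-c$ in the odd-$\ell$ case, each defining an integer interval symmetric about $\ell/2$ of length $c$ and $p-c$ respectively. Intersecting with $[j,i]$ (which is itself symmetric about $\ell/2$) yields multiplicities $\min(c,\,i-j+1)$ and $\min(p-c,\,i-j+1)$; substituting $c = 2n-1$ or $c = p-2n$ and tracking the range of $n$ recovers exactly the two formulas in the proposition. The main obstacle is the case-by-case verification of the uniformity in the third step, together with the tensor product formula of the second step, which together constitute the authors' ``messy computations''.
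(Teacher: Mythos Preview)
Your argument is correct and follows the same overall strategy as the paper: reduce to tensor products of Jordan blocks via $\Omega^0(D_k\down{C_p})$, apply the Feit/Clebsch--Gordan decomposition for $J_a\otimes_\F J_b$, and then count multiplicities.

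The only organizational difference is in how the parity bookkeeping is handled. The paper uses the identity $\Omega^0(J_x\otimes_\F J_y)\cong J_{p-x}\otimes_\F J_{p-y}$ (for $x+y>p$) to collapse the parity-of-$k$ cases: when $k$ is even one has $\Omega^0(J_{p-k}\otimes_\F J_{p-(i+j)+k})\cong\Omega^0(J_k\otimes_\F J_{i+j-k})$, so every summand becomes $\Omega^0(J_k\otimes_\F J_{i+j-k})$ regardless of the parity of $k$, and the paper then treats the four cases $2\mid(i+j)$ vs.\ $2\nmid(i+j)$ and $i+j\le p$ vs.\ $i+j>p$ separately (writing out only one). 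You instead keep the $m_k$'s, package the two Feit identities into a single Clebsch--Gordan formula with range $|a-b|+1\le c\le\min(a+b-1,2p-a-b-1)$, and then do the parity split on $i+j$ uniformly via the symmetric-interval counting. Your route avoids splitting on whether $i+j\le p$ and gives a cleaner multiplicity count; the paper's route avoids the four-subcase check on $(m_k,m_{\ell-k})$. Both are straightforward once the tensor-product formula is in hand.
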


\begin{proof}
This Proposition uses the following identity (see Theorem 2.7(i,iv) in Chapter VIII of \cite{Feit}): for $1\leq x,y \leq p-1$, we have $ J_x \otimes_\F J_y \cong \bigoplus_{k=1}^{\min\{x,y\}} J_{|x-y|+2k-1}$ if $x+y \leq p$, while $\Omega^0(J_x \otimes_\F J_y) \cong J_{p-x} \otimes_\F J_{p-y}$ if $x+y > p$.  We provide an outline for the case $2 \mid (i+j) \leq p$; the other cases are similar.  In this case $k \equiv i+j-k \pmod 2$ for all $k$. Notice that $(D_k\boxtimes_\F D_{i+j-k})\down{\Delta C_p}\cong (D_k\down{C_p})\otimes_\F (D_{i+j-k}\down{C_p})$, so that
\begin{align*}
\Omega^0(\nabla(i,j) \down{\Delta C_p}) &= \bigoplus_{k=j}^{i} \Omega^0 ((D_{k} \down{C_p}) \otimes_\F (D_{i+j-k} \down{C_p})) \\
&= \bigoplus_{k=j}^{i} \Omega^0 (\Omega^0(D_{k} \down{C_p}) \otimes_\F \Omega^0(D_{i+j-k} \down{C_p})) \\
&\cong \bigoplus_{\substack{j \leq k \leq i\\ k \text{ odd}}} \Omega^0(J_k \otimes_\F J_{i+j-k}) \oplus \bigoplus_{\substack{j \leq k \leq i\\ k \text{ even}}} \Omega^0(J_{p-k} \otimes_\F J_{p-i-j+k}) \\
&\cong \bigoplus_{\substack{j \leq k \leq i\\ k \text{ odd}}} \Omega^0(J_k \otimes_\F J_{i+j-k}) \oplus \bigoplus_{\substack{j \leq k \leq i\\ k \text{ even}}} \Omega^0(J_{k} \otimes_\F J_{i+j-k}) \\
&\cong \bigoplus_{k=j}^i \bigoplus_{l=1}^{\min\{k,i+j-k\}} J_{|i+j-2k|+2l-1} \\
&\cong \bigoplus_{k=1}^{\frac{i+j}{2}} \min\{2k-1,i-j+1\}J_{2k-1}.
\end{align*}
\end{proof}

We record two useful lemmas whose proofs are obtained by brute force computations using Proposition \ref{P:Deltarestrict}.

\begin{lem} \label{L:lem1}
Let $a,b \in \mathbb{Z}$ such that $2 \leq b \leq a \leq p-2$ and $a+b \leq p$.  Then $$\Omega^0(\nabla(a+1,b) \down{\Delta C_p}) \oplus \Omega^0(\nabla(a,b-1) \down{\Delta C_p})$$ has exactly $N$ indecomposable summands, where
$$
N =
\begin{cases}
\tfrac{1}{2}(a+3b-2)(a-b+2), &\text{if } 2 \mid (a+b); \\
\tfrac{1}{2}(a+3b-1)(a-b+1)+2b-1, &\text{if } 2 \nmid (a+b) \text{ and } a+b < p; \\
\tfrac{1}{2}(a+3b-1)(a-b+1)+(2b-1)-(a-b+2), &\text{if } 2 \nmid (a+b) \text{ and } a+b = p.
\end{cases}
$$
\end{lem}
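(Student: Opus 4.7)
The proof will be a direct counting exercise built on Proposition \ref{P:Deltarestrict}. I would begin by observing that $(a{+}1)+b$ and $a+(b{-}1)$ share the parity opposite to that of $a+b$, so the two summands $\nabla(a+1,b)$ and $\nabla(a,b-1)$ fall into the same branch of Proposition \ref{P:Deltarestrict}. Moreover $(a+1)-b+1 = a-(b-1)+1 = a-b+2$, so the parameter controlling the multiplicities is the same in both. Writing $S(i,j)$ for the number of indecomposable summands of $\Omega^0(\nabla(i,j)\down{\Delta C_p})$, the task reduces to
\[
N = S(a+1,b) + S(a,b-1),
\]
where each $S(i,j)$ is a sum of the form $\sum_{k=1}^{M}\min\{2k-\varepsilon,\, a-b+2\}$, with $\varepsilon\in\{0,1\}$ determined by parity and $M$ given by the minimum appearing in Proposition \ref{P:Deltarestrict}.

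The next step is to pin down $M$ in each case. Using the hypothesis $a+b\leq p$, a short comparison of the two arguments of the min shows: when $a+b$ is even (so $a+b\leq p-1$) the upper limits are $\tfrac{a+b}{2}$ for $\nabla(a+1,b)$ and $\tfrac{a+b-2}{2}$ for $\nabla(a,b-1)$; when $a+b$ is odd and $a+b<p$ (so $a+b\leq p-2$) they are $\tfrac{a+b+1}{2}$ and $\tfrac{a+b-1}{2}$; in the borderline case $a+b=p$, the second argument of the min forces the upper limit for $\nabla(a+1,b)$ to drop from $\tfrac{p+1}{2}$ to $\tfrac{p-1}{2}$, while the one for $\nabla(a,b-1)$ is unaffected. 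This is precisely where the three cases in the statement come from, and the correction term $-(a-b+2)$ in the third case will arise from the loss of a single summand of value $a-b+2$ relative to the second case.

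The third step is pure arithmetic. In each case I would split the sum at the threshold $m=\tfrac{a-b+2}{2}$ (even-parity branch) or $m'=\tfrac{a-b+3}{2}$ (odd-parity branch) where the minimum switches from $2k-\varepsilon$ to $a-b+2$; the hypothesis $b\geq 2$ ensures this threshold lies inside the summation range. Using $\sum_{k=1}^{m}2k = m(m+1)$ and $\sum_{k=1}^{m'}(2k-1) = (m')^{2}$, one obtains closed forms for $S(a+1,b)$ and $S(a,b-1)$ and adds them. For example, in the even case
\[
S(a+1,b)=\tfrac{(a-b+2)(a+3b)}{4},\qquad S(a,b-1)=\tfrac{(a-b+2)(a+3b-4)}{4},
\]
which sum to $\tfrac{1}{2}(a+3b-2)(a-b+2)$, as required; the other two cases are dispatched in the same way.

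I expect no conceptual difficulty here; the only genuine obstacle is bookkeeping — keeping the three cases and two parities apart, checking the thresholds do lie inside the summation range (which is where the assumption $b\geq 2$ is used), and verifying that the drop of one term at $a+b=p$ accounts for exactly the asymmetry in the stated formula.
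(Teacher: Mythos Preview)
Your proposal is correct and follows exactly the approach the paper takes: the authors simply state that the lemma is ``obtained by brute force computations using Proposition \ref{P:Deltarestrict}'', and your outline carries out precisely those computations, correctly identifying the parity shift, the common multiplicity parameter $a-b+2$, the upper limits in each of the three cases, and the origin of the correction term $-(a-b+2)$ when $a+b=p$.
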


\begin{lem} \label{L:lem2}
Let $a,b \in \mathbb{Z}$ such that $2 \leq b \leq a \leq p-2$ and $a+b \leq p$.  Let $Q$ be a $\F(\Delta C_p)$-module having a filtration with factors
\[
\Omega^0(\nabla(a,b)\down{\Delta{C_p}}),\ \Omega^0(\nabla(a+1,b)\down{\Delta{C_p}}) \oplus \Omega^0(\nabla(a,b-1)\down{\Delta{C_p}}),\ \Omega^0(\nabla(a,b)\down{\Delta{C_p}}).
\]
Then the dimension of $Q$ is
$$
\begin{cases}
\tfrac{p}{2}(a+3b-2)(a-b+2) + a(1-2b)-b, &\text{if } 2 \mid (a+b); \\
\tfrac{p}{2}(a+3b-1)(a-b+1)+ a(2b-1) + b, &\text{if } 2 \nmid (a+b) \text{ and } a+b < p; \\
\tfrac{p}{2}(a+3b-1)(a-b+1)+ a(2b-1) + b -p(a-b+2), &\text{if } 2 \nmid (a+b) \text{ and } a+b = p.
\end{cases}
$$
\end{lem}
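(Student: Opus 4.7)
The plan is to compute $\dim Q$ by summing the dimensions of the composition factors in the given filtration, giving
\[
\dim Q = 2\dim\Omega^0(\nabla(a,b)\down{\Delta C_p}) + \dim\Omega^0(\nabla(a+1,b)\down{\Delta C_p}) + \dim\Omega^0(\nabla(a,b-1)\down{\Delta C_p}),
\]
and then to evaluate each of the three dimensions on the right directly from Proposition \ref{P:Deltarestrict} using $\dim J_m = m$.

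The key observation is that the parities of $i+j$ for the pairs $(a+1,b)$ and $(a,b-1)$ are both opposite to that of $(a,b)$. Hence one of the two formulas in Proposition \ref{P:Deltarestrict} applies to $\Omega^0(\nabla(a,b)\down{\Delta C_p})$, and the other applies to both $\Omega^0(\nabla(a+1,b)\down{\Delta C_p})$ and $\Omega^0(\nabla(a,b-1)\down{\Delta C_p})$. Moreover, since $p$ is odd, $a+b$ even forces $a+b<p$, so there are exactly three cases, matching those in the statement. The third case ($a+b$ odd and $a+b=p$) is singled out because for $\nabla(a+1,b)$ one then has $i+j=p+1$, which forces the upper summation limit in Proposition \ref{P:Deltarestrict} to drop from $(i+j)/2$ to $p-(i+j)/2=(p-1)/2$; this drop is precisely what produces the corrective term $-p(a-b+2)$ in the closed form.

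In each case I would split the sum $\sum_k \min\{2k-1,i-j+1\}(2k-1)$ or $\sum_k \min\{2k,i-j+1\}(p-2k)$ at the threshold $k$-value where the $\min$ becomes constant, and evaluate the two resulting polynomial sums using standard identities such as $\sum_{k=1}^m (2k-1) = m^2$, $\sum_{k=1}^m (2k-1)^2 = m(2m-1)(2m+1)/3$ and $\sum_{k=1}^m k(p-2k) = (p-1)m(m+1)/2 - m(m+1)(2m+1)/3$, together with their analogues. Summing the three contributions and collecting terms then yields the claimed closed form.

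The main obstacle is purely bookkeeping: the $\min$ in each summand splits the range of $k$ at a point that depends piecewise on whether $a-b+1$ (respectively $a-b+2$) is less than the doubled upper summation limit, producing several subcases to evaluate and combine across the three base cases. No new ideas are needed beyond Proposition \ref{P:Deltarestrict} and elementary arithmetic; the calculation is routine but lengthy, which is consistent with the authors' description of it as ``brute force''.
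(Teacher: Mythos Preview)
Your approach is correct and is exactly what the paper does: the authors state only that the proof is ``obtained by brute force computations using Proposition~\ref{P:Deltarestrict}'', and your plan---summing the dimensions of the three filtration factors, splitting according to the parity of $a+b$, and identifying the drop in the upper summation limit when $(a+1)+b=p+1$ as the source of the correction $-p(a-b+2)$---is precisely that computation.
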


We are now ready to prove Proposition \ref{P:messy}.

\begin{proof}[Proof of Proposition \ref{P:messy}]
We note first the following identity, which is a special case of Lemma 5.10 in \cite{ST}:
$$ D^{[a,b]_{B_0}} \otimes_{\F} \sgn \cong D^{[p-a,p-b]_{B_0}} \qquad (a\geq b).$$
Here $\sgn$ denotes the signature representation of $\F\sym{2p}$.

As $[p-1,b]_{B_0} = (p+b,1^{p-b})$, $D^{[p-1,b]}$ has dimension $\binom{2p-2}{p-b}$, which is coprime to $p$ if $2 \leq b < p$.  Thus $D^{[p-1,b]}\down{\Delta C_p}$ is not free.  Since $D^{[p-1,b]} \otimes_\F \sgn \cong D^{[p-b,1]}$, the same dimension argument shows that $D^{[p-b,1]} \down{\Delta C_p}$ is not free too.  (The astute reader will of course notice that this already proves $V^{\#}_E(D^{[p-1,b]}) = V^{\#}_E(D^{[p-b,1]}) = \F^2$ for all $2 \leq b < p$, where $E$ is any Sylow $p$-subgroup of $\sym{2p}$.)

We are thus left with $[a,b]$ with $2 \leq b \leq a \leq p-2$.  We consider first the case $a+b \leq p$.  By Theorem \ref{T:Mik} and Proposition \ref{P:Deltarestrict}, we see that $D^{[a,b]} \down{\Delta C_p}$ has a direct summand $Q$ having a filtration with factors
\[
\Omega^0(\nabla(a,b)\down{\Delta{C_p}}),\ \Omega^0(\nabla(a+1,b)\down{\Delta{C_p}}) \oplus \Omega^0(\nabla(a,b-1)\down{\Delta{C_p}}),\ \Omega^0(\nabla(a,b)\down{\Delta{C_p}}).
\]
If $D^{[a,b]} \down{\Delta C_p}$ is free, then $Q$ is free, isomorphic to a direct sum of $\dim(Q)/p$ copies of the uniserial indecomposable free $\F(\Delta C_p)$-module of dimension $p$, and thus the composition length of every semi-simple subquotient of $Q$ is bounded above by $\dim(Q)/p$.  However, by Lemmas \ref{L:lem1} and \ref{L:lem2}, and noting that $a(2b-1)+b < (p-1)(2b-1) + b = p(2b-1) -b+1 < p(2b-1)$, we see that the number $N$ of indecomposable summands in $\Omega^0(\nabla(a+1,b)\down{\Delta{C_p}}) \oplus \Omega^0(\nabla(a,b-1)\down{\Delta{C_p}})$ --- a subquotient of $Q$ --- is greater than $\dim(Q)/p$, so that $Q$, and hence $D^{[a,b]} \down{\Delta C_p}$, cannot be free.

Finally, if $a+b > p$, then $D^{[a,b]} \otimes_{\F} \sgn \cong D^{[p-a,p-b]}$, so that
$$
D^{[a,b]} \down{\Delta C_p} \cong (D^{[a,b]} \otimes_{\F} \sgn) \down{\Delta C_p} \cong D^{[p-a,p-b]} \down{\Delta C_p}
$$
since the elements in $\Delta C_p$ are all even permutations in $\sym{2p}$.  The above paragraph applies and shows that $D^{[p-a,p-b]} \down{\Delta C_p}$ is not free.  Hence $D^{[a,b]} \down{\Delta C_p}$ is not free, and the proof is complete.
\end{proof}

\end{document}